\documentclass[12pt,oneside]{article}
\newcommand{\Path}{}
\usepackage{ \Path Paper_style_2}
\usepackage{ \Path Keywords_WB}
\usepackage{ \Path Environments}
\usepackage{ \Path Category}

\usepackage{tikz-cd}
\usepackage{ \Path tikzcd2}
\usepackage{subcaption}


\DeclareMathOperator{\Null}{\mathcal{N}}
\DeclareMathOperator{\Nullity}{Nullity}

\newcommand{\main}{\mathcal{M}}
\newcommand{\base}{\mathcal{B}}
\newcommand{\inter}{\mathcal{I}}
\renewcommand{\REnv}[2]{ \mathcal{RE}_{#1} \left( #2 \right) }
\renewcommand{\LEnv}[2]{ \mathcal{LE}_{#1} \left( #2 \right) }
\newcommand{\JetEucZ}[1]{ C^{#1}-\textbf{Euc} }
\newcommand{\Poly}[1]{ \textbf{\text{Poly}}^{#1} }
\newcommand{\Jet}[1]{ \text{Jet}^{#1} }
\renewcommand{\Forget}{\bot}
\newcommand{\incl}{\iota}

\begin{document}
\title{The concept of null in general spaces and contexts}
\author{Suddhasattwa Das\footnotemark[1]}
\footnotetext[1]{Department of Mathematics and Statistics, Texas Tech University, Texas, USA}
\date{\today}
\maketitle

\begin{abstract} The notions of null-sets and nullity are present in all discourses of mathematics. They are based on the dual-pair of notions of "almost-every" and "almost none". A notion of nullity corresponds to a choice of subsets that one interprets as null or empty. The rationale behind this choice depends on the context, such as Topology or Measure theory. One also expects that the morphisms or transformations within the contexts preserve the nullity structures. To formalize this idea a generalized notion of nullity is presented as a functor between categories. A constructive procedure is presented by which an existing notion of nullity can be extended functorially to categories with richer structure. Nullity is thus presented as an arbitrary construct, which can be extended to broader contexts using well defined rules. These rules are succinctly expressed by right and left Kan extensions.
\end{abstract}
\begin{keywords} Category, Functor, Arrow category, Null sets \end{keywords}
\begin{AMS}	18D99, 18A25 \end{AMS}

\section{Introduction} \label{sec:intro}

All discourses in Mathematics are rooted in some choice of a space. A space, loosely construed, is a set endowed with some structure. The mathematical properties to be examined are dictated by the choice of structure. For example the basic Euclidean spaces $\real^n$ could be studied as a vector space if the focus is on their linear structure, as a manifold if the focus is on their differential structure, or as metric spaces if the focus is on their in-built notion of distance. Any mathematical property is ultimately set theoretic and thus becomes synonymous with some subset of the space. The subset, called the \emph{characteristic set}, is simply the collection of points in the space which display the said property. One is often faced with the question of whether a property is typical or common, which is interpreted as a question of the typicality of its characteristic set. The equivalent version of this question is whether in a given space, a subset is empty, almost empty, almost full or full. This simple question has been of fundamental importance throughout the development of mathematics. The article presents the natural compositional structure in various notions of \emph{almost empty} and \emph{almost full}. 

Several landmark discoveries in mathematics are based on assumptions which do not hold uniformly over a space, but hold almost everywhere. In other words the sufficient conditions are violated on a null set. The two most familiar notions of null or \textit{nullity} are based on topology and measure respectively. Any topological space is naturally equipped with a notion of dense and nowhere-dense. Several applied fields such as \textit{Learning theory} rely on establishing that certain properties hold in a dense set. The most essential task of learning is choosing a \emph{hypothesis space}, and some of the key developments in Learning theory \cite[e.g.]{Hornik1990univ, HornikStinchcombeWhite1989multi, ChenChen1995universal, Das2023Lie, DasGiannakis2023harmonic} establish that certain hypothesis spaces are \emph{universal}, i.e., dense in a certain ambient functional space. Topological density also arises in spaces with more complicated nature, for example the space of dynamical systems. Dynamical systems represent systems evolving under a deterministic rule of transformation. In spite of the huge variety of phenomenon that can be seen, their study is made easier by the fact that there are some canonical dynamical systems \cite[e.g.]{DGJ_compactV_2018, PalisSmale1970structural, HirschEtAl1970ngbr, kieffer1980coding, AlpernPrasad2005towers, Das2023Koop_susp, Jewett1970prevalence} which are dense topologically. This is yet another intellectual merit of the notion of topological density. It justifies the study of some special cases as long as they are topologically dense.

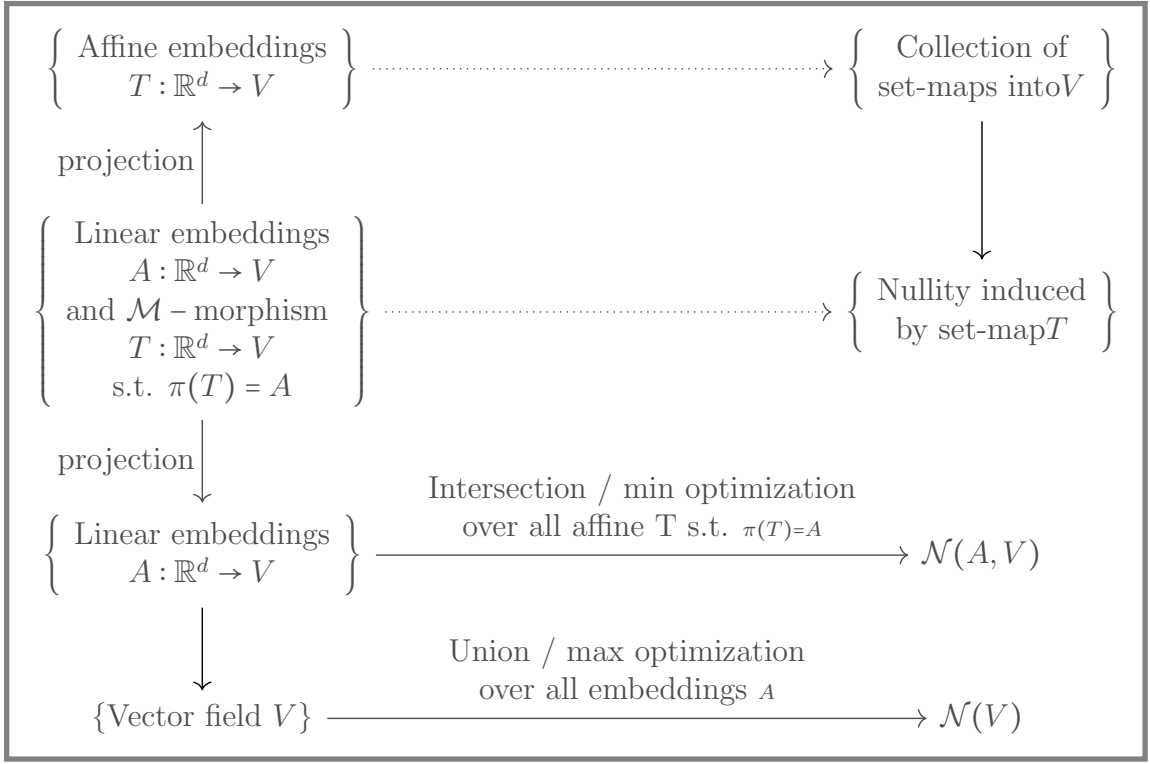
\begin{figure}[!t]
	\centering
	%
	\begin{tikzpicture} \node[draw, inner sep=5pt, draw=ChhaiD, line width=2pt] (box){
		\begin{tikzcd}[row sep = large, column sep = large]
			\mbox{Step 1.} \arrow[d, line width = 2] & \chhaiB{ \braces{ \begin{array}{c} \mbox{Affine embeddings} \\ T : \real^d \to V \end{array} } } \arrow[ChhaiB, dotted, rrrr] &&&& \chhaiB{ \braces{ \begin{array}{c} \mbox{Collection of} \\ \mbox{set-maps into } V \end{array} } } \arrow[d] \\
			\mbox{Step 2.} \arrow[d, line width = 2] & \chhaiB{ \braces{ \begin{array}{c} \mbox{Linear embeddings} \\ A : \real^d \to V \\ \mbox{an affine map} \\ T : \real^d\to V \\ \mbox{s.t. } \pi(T) = A  \end{array} } } \arrow[u, "\mbox{projection}", ChhaiB] \arrow[d, "\mbox{projection}"', ChhaiB] \arrow[ChhaiB, dotted, rrrr] &&&& \chhaiB{ \braces{ \begin{array}{c} \mbox{Nullity induced} \\ \mbox{by set-map } T \end{array} } } \\
			\mbox{Step 3.} \arrow[d, line width = 2] & \chhaiB{ \braces{ \begin{array}{c} \mbox{Linear embeddings} \\ A : \real^d \to V \end{array} } } \arrow[d] \arrow[ChhaiB]{rrrr}[name=n5]{\begin{array}{c} \mbox{Intersection / min optimization} \\ \mbox{over all affine T s.t. } \pi(T)=A \end{array}} &&&& \chhaiB{ \Null(A, V) } \\
			\mbox{Step 4.} & \chhaiB{ \braces{ \mbox{Vector field } V } } \arrow[ChhaiB]{rrrr}[name=n6]{\begin{array}{c} \mbox{Union / max optimization} \\ \mbox{over all embeddings } A \end{array}} &&&& \chhaiB{ \Null(V) }
		\end{tikzcd}
	}; \end{tikzpicture}
	%
	\caption{Construction of prevalence. The notion of prevalence, formulated in \cite{HSYprevalence1992} is one of the main inspirations for the analysis in this article. Prevalence is an extension of the notion of Lebesgue-null to infinite dimensional vector spaces. Thus prevalence assigns to any vector space $V$ a collection of sets labeled $\Null(V)$ which is closed under taking subsets, and which coincides with the usual Lebesgue zero measure sets if $V$ is finite dimensional. The construction of $\Null(V)$ goes through several steps, as presented in \eqref{eqn:prevalence:1} and \eqref{eqn:prevalence:6}. One first takes an arbitrary affine embedding $T$ of some $\real^d$ into $V$. such a $T$ is also a set-map and push-forwards the null sets of $\real^d$ into $V$. Next one takes the intersection of all these collections, as $T$ is varied with its linear part $A$ kept fixed. This leads to an $A$-dependent collection $\Null(A,V)$ of null-sets. Finally, one takes a union of all these $\Null(A,V)$ for various $A$s, to get the final nullity structure $\Null(V)$. This article shows how these steps are categorical in nature, and leads to a notion of nullity which is both invariant and an extension. The categorical approach allows this construction to be applied to several other contexts as well.}
	\label{fig:prevalence:1} 
\end{figure}

This useful notion of topological density is inadequate for many other situations. In the study of dynamical systems, one sees a peculiar feature in chaotic systems -- periodic points are topologically dense but statistically null \cite[e.g.]{DasYorke2020, DasSaddles2015, DasJim2017chaos}. An even simpler example is the density of rational numbers on the real line. A number picked at random is almost surely irrational. Thus topological full-ness might be irrelevant from a statistical or measure theoretic point of view. This prompts the formulation of the second notion of fullness or emptiness -- in terms of measure. One of the major breakthroughs in physics and mathematics was KAM theory \cite[e.g.]{Arnold1965, Arnold1963small, Herman1, Das2018Tongue}. It relies on the realization that the key property (quasiperiodicity) required to make conclusions about toral dynamics may not be universal but is measure theoretically full.

These two  notions -- \textit{topologically dense} and \textit{measure theoretically full} have been an essential component of several mathematical statements. They get interconnected whenever the reference measure is non-zero on the open sets of the topology. This is the case for the Haar measure on finite dimensional Lie groups. In such cases, being measure theoretically full implies being topologically dense. This is a simple and useful connection between two notions which are otherwise independent. The problem is that there is no natural Haar measure for infinite dimensional Lie groups, such as infinite dimensional vector spaces. 

The solution to this problem was the notion of \emph{prevalence} \cite{OttYorke_prevalence_2005, HSYprevalence1992}. Given any finite dimensional vector space $P$, we denote its Lebesgue volume measure by $\Leb_{P}$. Let $\calV$ be an infinite dimensional vector space, and $S$ a subset. Then $S$ is said to be \emph{shy} if there is a finite dimensional vector sub-space $P$ called a \emph{probe} such that 
\begin{equation} \label{eqn:prevalence:1}
	\Leb_{P} \paran{ P \cap \braces{ S+v } = 0 }, \quad \forall v\in \cal V .
\end{equation}
A subset is said to be \emph{prevalent} if its complement is shy. Thus shy and prevalent are analogs of almost empty and almost full, for infinite dimensional vector spaces. These notions have enabled results of fundamental importance to be stated and proved \cite[e.g.]{SauerEtAl1991, HuntKaloshin1997proj, sontag2003differential, fraysse2006smooth, HuntKaloshin1999reg, Kaloshin1997prevalence, Hunt1994dffrntl}. Note that according to \eqref{eqn:prevalence:1}, if a set $A$ is prevalent or shy, then so is any translate $A+v$ of $A$. This translation-invariance makes prevalence a natural definition for vector spaces. Prevalence offers an example of a new construction of nullity and fullness, which retains certain natural properties such as invariance under translation. This article re-examines this construction, and generalizes it to arbitrary settings. 

In spite of the naturality of prevalence, it is still inadequate to describe typical behavior in many situations. As pointed out in \cite{BerryDas2023learning, BerryDas2024review}, the existing notion of prevalence is not adequate for keeping track of null sets undergoing nonlinear transformation. One of the major discoveries in Dynamical systems theory was the phenomena of \emph{invariant graphs} \cite{Stark1999delay, Stark1999graphs} in skew product dynamical systems, which provide universal encodings of dynamical systems into Euclidean spaces. Invariant graphs arise from any pair comprising a dynamical system and a real-valued measurement on its phase-space. One of the most important conjectures is that almost every such pair can produce an encoding which is an embedding. There is no existing notion of nullity for such pairs, nor is there any tool to track the transformation of nullity under nonlinear transformations such as the operation of invariant-graphs.

Being typical or null are two sides of the same concept. Nullity is not a fixed and pre-conceived notion in spaces, but needs to be adapted and constructed. Ideally when one conceives of a notion of nullity, i.e., declares that a certain collection of sets in a space $V$ are the null sets, some logical rules are followed : 
\begin{enumerate} [(i)]
	\item subsets of a null set must be null too;
	\item if $V$ belongs to a class of similar objects, then the transformations or relations that bind these objects must preserve the respective nullity structures;
	\item if $V$ contains imprints or embeddings of structures such as Euclidean spaces, its new nullity structure should be an extension of those on the embedded images.
\end{enumerate}
The goal of this article is to present a very general construction of nullity, that preserves these properties. Note that nullity is a purely set-theoretic or lattice-theoretic notion and is independent of a choice of measure. We review again in Section \ref{sec:cat_cnstrct} how the collection of null sets according to \textit{prevalence} can be realized as the null-sets of various different choices of measures. The main discovery presented is the categorical nature of nullity, and how their extension to new contexts is essentially the categorical operation of \emph{Kan extensions}. The problem of invariant graphs described above pertains to Dynamical systems theory and it still remains open. The scope of this work which is primarily within Analysis, provides a clearer way forward to that problem.

The abstraction and generalization of nullity that we undertake is inspired by Sauer, Hunt and Yorke's construction of prevalence \cite{HSYprevalence1992}. The essential feature of prevalence are certain structural properties which are not limited to vector fields. These structural properties encode the embedding of the category of vector spaces within the category of affine spaces, and the projection from the latter to the former. We axiomatize such a relation in categorical language, in Assumptions \ref{A:1}, \ref{A:2} and \ref{A:3}. 

The categorical reconstruction of nullity also provides a separation of the ideas that go into nullity - there are some which are purely set-theoretic and some which are dependent on the context, which for example, could be vector spaces or manifolds. The former are universal and used in all constructs of nullity. Another contribution of this article is the category of $\Nullity$, which provides a concise mathematical definition of the universal set theoretic aspects of nullity.

\paragraph{Outline} We next take a closer look at the construction of prevalence in Section \ref{sec:prvlnc}. The definition will be restated in a manner that makes it suitable for immediate generalization. Next in Section \ref{sec:set} we present a general and broad definition of nullity, that uses the language of categories and functors. This allows a discussion of nullity which is very context independent. The categorical language developed is used to obtain a categorical redefinition of nullity and prevalence in Section \ref{sec:cat_cnstrct}. Next in Section \ref{sec:results} we analyze properties such as uniqueness, invariance and extendability of the nullity constructs. Finally in Section \ref{sec:examples} we present some examples of the applications of our theory. Section \ref{sec:proofs} contain some technical results and proofs.

\section{Deconstructing prevalence} \label{sec:prvlnc}

We now take a closer look at the concept of prevalence. Henceforth we shall use the term \emph{null} synonymously with \emph{shy}. Let us fix a vector space $V$ and a subset $S$ of $V$. We restate \eqref{eqn:prevalence:1} as
\[ S \mbox{ null} \quad\Leftrightarrow\quad \exists \begin{array}{c} \mbox{finite dim.} \\ \mbox{subspace} \end{array} P \,:\, \forall v\in \calV \,:\, \Leb_{P} \SetDef{ p\in P }{ p\in \braces{ S+v } } = 0 .\]
The translate $v$ is an invertible affine transform. Thus the above condition may be re-written as
\[ S \mbox{ null} \quad\Leftrightarrow\quad \exists \begin{array}{c} \mbox{finite dim.} \\ \mbox{subspace} \end{array} P \,:\, \forall v\in \calV \,:\, \Leb_{P} \SetDef{ p\in P }{ p + v \in S } = 0 .\]
Note that a probe is a finite dimensional subspace of $\calV$. The subspace $P$ may be interpreted as the image of a linear embedding $A:\real^d \to V$. For each $v\in V$, $x \mapsto Ax + v$ is an affine map from $\real^d \to V$. Given two vector spaces $X,Y$ let $\Affine(X;Y)$ denote the set of affine linear maps from $X$ to $Y$. Then the condition may be re-written as
\begin{equation} \label{eqn:prevalence:2} 
	S \mbox{ null} \quad\Leftrightarrow\quad \exists \begin{array}{c} \mbox{linear} \\ \mbox{embedding} \end{array} 
	\begin{tikzcd} \real^d \arrow[d, "A"'] \\ V \end{tikzcd} \,:\,
	\forall \begin{array}{c} \mbox{affine} \\ \mbox{map} \end{array}
	\begin{tikzcd} \real^d \arrow[d, "T"'] \\ V \end{tikzcd} 
	\mbox{ s.t. } \text{Lin}(T)=A \,:\,
	\Leb_{\real^d} T^{-1}(S) = 0 .
\end{equation}    
Equation \eqref{eqn:prevalence:2} restates \eqref{eqn:prevalence:1} in terms of inverse image under maps. A class of maps has been identified as $\Affine(X;Y)$ and nullity is in terms of inverse images under these maps. Let $\EucCat$ denote the collection of finite dimensional vector spaces. Given an $X\in \EucCat$ let $\Null(X)$ denote the subsets of $X$ which are null with respect to Lebesgue measure of $X$, which we have set as the natural measure for $X$. Let $\Affine \paran{ \real^d; V }$ denote the collection of affine maps from $\real^d$ to $V$.  Define 
\[\Null \paran{ V; T} := \SetDef{ \calS \in 2^{V} }{ T^{-1} (\calS) \in \Null \paran{\dom(T)} } , \quad T \in \Affine \paran{ \real^d; V } ,\; v\in V .\]
For every linear map $A$ and vector $v$ in its codomain, let $T_{A,v}$ denote the affine map $x \mapsto Ax + v$. We fix the notation $\Null \paran{ V; A, v} := \Null \paran{ V; T_{A,v}}$. Given any two vector space $X, X'$, let $\Hom_{\VectCat, \text{mono}}(X, X')$ denote the collection of all injective linear maps from $X$ to $X'$. The construction of prevalence follows the sequence :
\[ \Null \paran{ V; A} := \cap \SetDef{ \Null \paran{ V; T_{A, v}} }{ v\in V }, \quad \Null \paran{V} := \cup \SetDef{ \Null \paran{ V; A} }{ A \in \Hom_{\VectCat, \text{mono}} \paran{ \real^d; V } } .\]
In summary, the collection of null sets in $V$ is defined by the following max-min optimization :
\begin{equation} \label{eqn:prevalence:6}
	\boxed{ 
		\begin{array}{c} \Null(V) \\ {} \\ {} \\ {} \end{array} 
		\begin{array}{c} = \\ {} \\ {} \\ {} \end{array}
		\stackrel{\cup}{ \begin{array}{c} \text{Linear embedding} \\ A:\real^d \to V \\ {} \end{array} } 
		\stackrel{\cap}{ \begin{array}{c} \text{Affine embedding} \\ T:\real^d \to V \\ \proj(T) = A \end{array} } 
		\begin{array}{c} \Null \paran{ V; T} \\ {} \\ {} \\ {} \end{array} }
\end{equation}
The construction \eqref{eqn:prevalence:6} while being equivalent to \eqref{eqn:prevalence:1} presents our approach to the question of nullity and genericity. Now it is more apparent hoe the shy or null sets in an infinite dimensional space $\calV$ are built in terms of the null sets of finite dimensional spaces. The details of the construction are in the consecutive $\cup - \cap$ operation, akin to a $\max-\min$ operation This makes nullity a structural concept. Figure \ref{fig:prevalence:1} deconstructs this definition into logical steps. Nullity thus has the following elements to it : 
\begin{enumerate} [(i)]
	\item An infinite collection $\calC$ - which in this case is the collection of all vector spaces, finite and infinite.
	\item A concept of nullity for objects in a sub-collection - in this case the finite dimensional vector spaces.
	\item Nullity is a label attached to pairs $(S,X)$, with $X$ being an object of $\calC$ and $S$ a subset of $X$.
	\item The collection of objects in $\calC$ are bound to each other by some select relations - in this case affine maps.
	\item Nullity in a general object $X$ is in terms of pull-backs along these relations.
\end{enumerate}

All of this suggest categorical structures being present in all stages.

\paragraph{Category} The arrangement $\calC$ has objects and relations which can be composed. The composition of two affine maps is again affine. All this points to the mathematical property of a \emph{category} and nullity as a categorical or compositional property. A category is a bare structural definition that may be found in contexts of very different kinds. We present our categorical approach in the next section. The categorical approach is the logical choice for making a generalization of the notions of prevalent / typical, null / shy, in context different from linear spaces and affine maps, such as to differential maps and manifolds. 

A category $\calC$ is a collection of two kinds of entities : 
\begin{enumerate} [(i)]
	\item objects : usually representing different instances of the same mathematical construct;
	\item morphism : connecting arrows from one object to another which satisfy three properties --
	\item compositionality : given any three objects $a,b,c$ of $\calC$ and two morphisms $a \xrightarrow{f} b$ and $b \xrightarrow{g} c$, the morphisms can be joined end-to-end to create a composite morphism represented as $a \xrightarrow{g \circ f} b$;
	\item associativity : the composition of morphisms is associative;
	\item identity morphism : each object $a$ is endowed with a morphism $\Id_a$ called the \emph{identity} morphism, which play the role of unit element in composition.
\end{enumerate}

Given two points $x,y$ in the object set $ob(\calC)$, the collection of arrows from $x$ to $y$ is denoted as $\Hom(x;y)$. Note that this collection may be infinite, finite or even empty. The last criterion implies that for each $x$ $\Hom(x;x)$ has at least one member. Whenever there are multiple categories being discussed, one uses the notations $\Hom_{\calC}(x;y)$ or $\calC(x;y)$ to indicate that the morphisms are within the category $\calC$.

\paragraph{Examples of categories} One of the most fundamental categories is $\SetCat$, the category in which objects are sets up to a certain prefixed cardinality, and arrows are arbitrary maps. Similarly $\Topo$ denotes the category of topological spaces, with continuous maps as arrows. We denote by $\VectCat$ the category in which the objects are vector spaces and arrows are linear maps. The collection $\Affine$ that we have already defined has the same objects as $\VectCat$ but all affine maps as morphisms. Note that this includes the morphisms in $\VectCat$. This makes $\VectCat$ a \emph{subcategory} of $\Affine$. Suppose $\calU$ is any set. Then the power-set $2^{\calU}$ of subsets of $\calU$ is a category, in which the relations are the subset $\subseteq$ relations. Note that there can be only at most one arrow between any two objects $A,B$ of this category, which is to be interpreted as inclusion. Such categories are known as \emph{preorders}, and other examples are the category of ordered natural numbers, real numbers, open covers, and the concept of infinitesimal \cite[see]{Das2023CatEntropy}. Note that the usual notion of prevalence \eqref{eqn:prevalence:1}, \eqref{eqn:prevalence:6} is about objects and morphisms in the category $\Affine$. However, the label itself applies to arbitrary subsets of a vector space, a relation non-existent within $\Affine$. The subset relation is contained within some suitable chosen power set category. The notion of an inverse image also involves the category $\SetCat$, which is thus a third category that gets involved in the definition. To be able to work simultaneously with different categories and relate one with another, we need the notion of transformations that preserve categorical structure : \emph{functor}s.

\paragraph{Functor} Given two categories $\calC, \calD$, a functor $F:\calC\to \calD$ is a mapping between their objects along with the following properties : 
\begin{enumerate} [(i)]
	\item For each $x,y\in ob(\calC)$, there is an induced map $F : \Hom_{\calC}(x;y) \to \Hom_{\calD}( Fx; Fy)$. Thus arrows / morphisms between any pair of points get mapped into arrows between the corresponding pair of points in the image.
	\item $F$ preserves compositionality : given any three objects $a,b,c$ of $\calC$ and two morphisms $a \xrightarrow{f} b$ and $b \xrightarrow{g} c$, $F(g\circ f) = F(g) \circ F(f)$.
	\item $F$ preserves identity : $F(\Id_a) = \Id_{F(a)}$.
\end{enumerate}

In summary, a functor is a map between the object-sets that also preserves the underlying categorical structure. Categorical structure is essentially compositionality. The preservation of compositionality is expressed through the last two criterion. The language of categories and functors have helped create a \emph{synthetic} approach to a wide array of topics, such as homology \cite[e.g.]{Das2024hmlgy, BauerLesnick2020prsstnc}, Probability theory \cite[e.g.]{fritz2020Stoch, FritzEtAl2023repr}, learning theory \cite[e.g.]{Shiebler2020cluster, Shiebler2022kan}, and logic \cite[e.g.]{Lambek1989logic, Awodey1996}.  We are now ready to begin a categorical redefinition of nullity in multiple contexts.

\section{Set theoretic aspects of nullity} \label{sec:set}

Nullity is essentially a set-theoretic concept. Regardless of the context such as manifolds or vector spaces, the collection of null sets lies in the realm of sets. Recall that :

\begin{definition} [Down-set] \label{def:downset}
	Given a preorder $\calO$, a down-set is a complete sub-preorder, i.e., a collection $\tilde{\calO}$ of objects of $\calO$ such that if $b\in \tilde{\calO}$, $a$ is an object in $\calO$ and $a\leq b$, then $a$ belongs in $\tilde{\calO}$ too.
\end{definition}

The most common example of a preorder is the power set $2^V$ of the underlying space $V$. Definition \ref{def:downset} is essential since the concept of nullity is in fact a choice of a down-set within $2^V$. This collection is closed under unions, intersections and contains the empty set. 

\begin{definition} [Nullity for sets] \label{def:null:1}
	Given a set $A$, a nullity structure or concept of nullity for $A$ is a down-set of the power-set $2^{A}$ of $A$.
\end{definition}

Nullity is primarily a set theoretic aspect. We now extend it to objects in arbitrary categories. The following will be a standing assumption throughout the discussion :

\begin{Assumption} \label{A:1}
	There is a category $\main$,  to be interpreted as the main category, and a functor $\gamma : \main \to \SetCat$.
\end{Assumption}

The functor $\gamma$ acts as the bridge from $\main$ to $\SetCat$. This allows the definition :

\begin{definition} [Nullity for individual objects] \label{def:null:2}
	A nullity-structure for an object $V$ in the category $\main$ that satisfies Assumption \ref{A:1}, is a nullity structure for the set $\gamma(V)$.
\end{definition}

The next category enables a concise and categorical definition of the set-theoretic aspect of nullity.

\begin{definition} [Nullity category] \label{def:null:3}
	Let $\calS$ be a collection of sets. Then $\Nullity(\calS)$ denotes the category whose objects are 
	\[ \paran{ A, \calN_A } \;:\; A\in \calS, \, \calN_A \mbox{ is a nullity structure of } A . \]
	A morphism $\phi$ from an object $\paran{ A, \calN_A }$ into an object $\paran{ B, \calN_B }$  corresponds to a map $\phi: A\to B$ such that
	\begin{equation} \label{eqn:mrphsm:1}
		\phi(A) \in \calN_B, \quad \forall A \in \calN_A .
	\end{equation}
\end{definition}

It is routine to check compositionality and associativity in this category. The rule in \eqref{eqn:mrphsm:1} upholds the principle that a null set cannot be transformed into a non-null set, it must be transformed into another null set. The collection $\calS$ is often chosen to be the collection of all sets up to a certain cardinality. In that case $\calS$ is dropped from the notation and we just use $\Nullity$ for simplicity. 

\paragraph{Remark} An alternate way of defining morphisms is as set-theoretic maps $\phi: A\to B$ such that
\[ \phi^{-1}(B) \in \calN_A, \quad \forall B \in \calN_B . \]
This rule upholds the dual principle that a non-null set cannot be mapped into a null set. However the resulting categorical structure would not be conducive to our analysis. 

There is an obvious forgetful / projection functor $\proj : \Nullity(\calS) \to \left[ \text{Set} (\calS) \right] $ with $\left[ \text{Set} (\calS) \right]$ being the sub-category of $\SetCat$ spanned by the sets in the collection $\calS$. If $\calS$ is clear from the context, then we denote the functor as $\proj : \Nullity \to \text{Set} $. The true role of the functorial nature of $\gamma$ is brought to light from the next definition :

\begin{definition} [Nullity for categories] \label{def:null:4}
	Let $\base$ be a category satisfying Assumption \ref{A:1}. Then a nullity-structure for $\base$ is a functor $\Null : \base \to \Nullity$ such that the following commutation holds
	\begin{equation} \label{eqn:null:4}
		\begin{tikzcd}
			\base \arrow[drr, "\gamma"'] \arrow[rrrr, "\Null"] &&&& \Nullity \arrow[dll, "\proj"] \\
			&& \SetCat
		\end{tikzcd}
	\end{equation}
\end{definition}

Thus a nullity construct for the category $\base$ associates to each object $b\in \base$ the set $\gamma(b)$ along with a down-set $\calN_{b}$ of the power set of $\gamma(b)$. This assignment must be such that for every morphism $f : b\to b'$ in $\base$, the following rule is observed :
\begin{equation} \label{eqn:mrphsm:2}
	\gamma(f)( A ) \in \calN_{b'} , \quad \forall A \in \calN_{b}.
\end{equation}
We can find plenty of examples of Definition \ref{def:null:4}.

\begin{example} [Lebesgue nullity] \label{ex:1}
	Let $\EucCat$ be the category of finite vector spaces, and linear maps as morphisms. Let $\EucCat_{\text{mono}}$ denote the sub-category in which the morphisms are restricted to injective maps. Then the assignment of each finite dimensional space to its collection of Lebesgue zero-measure sets, is a nullity-construct in the sense of Definition \ref{def:null:4}.
\end{example}

In the next example and later we use $\ManCat{k}$ to denote the category of manifolds and $C^k$-differentiable maps. 

\begin{example} [Nowhere dense] \label{ex:2}
	Let $\ManCat{1}$ be  Then the assignment of each manifold to its collection of nowhere dense sets, is a nullity-construct in the sense of Definition \ref{def:null:4}.
\end{example}

\begin{example} [Measure spaces] \label{ex:3}
	Let $\MeasCat$ be the category of measurable spaces, and a morphism between two measure spaces $(\Omega, \Sigma, \mu)$ and $(\Omega', \Sigma', \mu')$ is a map $f:\Omega \to \Omega'$ which is measurable with respect to $\Sigma, \Sigma'$ and such that $f_* \mu$ is absolutely continuous with respect to $\mu'$. The assignment to each $(\Omega, \Sigma, \mu)$ the collection of sets in $\Sigma$ which have $\mu$-measure zero, is a nullity-construct in the sense of Definition \ref{def:null:4}.
\end{example}

For the next example, recall that a $G-\delta$ set is a countable intersection of open sets. A complement of a $G-\delta$ set is called an $F-\sigma$ set.

\begin{example} [F-sigma] \label{ex:4}
	The assignment to each topological space $\Omega$ its collection of $F-\sigma$ sets, leads to a nullity-construct on $\Topo$ in the sense of Definition \ref{def:null:4}.
\end{example}

\begin{example} [Wiener measure] \label{ex:Wiener}
	The Wiener measure was one of the first useful measures to be constructed on infinite dimensional spaces \cite{wiener1921average}. It is a measure on the space of continuous paths in any manifold. The collection of subsets of an Euclidean space with zero Wiener measure was shown to be invariant under translations \cite{CameronMartin1944Wiener}, and later more generally under various family of kernel integral operators \cite{CameronMartin1945Wiener}.
\end{example}

To complete our understanding of $\Nullity$ we recall one final general categorical concept.

\paragraph{Comma categories}  A general arrangement of categories and functors :
\[\begin{tikzcd}
	\mathcal{A} \arrow[rd, "\alpha"'] & & \mathcal{B} \arrow[ld, "\beta"] \\
	& \mathcal{C} & 
\end{tikzcd}\]
creates a special category called a \emph{comma category} $\Comma{\alpha}{\beta}$. Its objects are compound objects
\[ ob\paran{\Comma{\alpha}{\beta}} := \SetDef{ \paran{ a, b, \phi } }{ a\in ob(\calA), \, b\in ob(\calB), \, \phi \in \Hom_{\calC} \paran{ \alpha a ; \beta b } } , \]
and the morphisms comprise of pairs
$\SetDef{ (f,g) }{ f\in \Hom(D),\, g\in \Hom(E) }$ such that the following commutation holds :
\[\begin{tikzcd} \blue{ (a,\phi,b) } \arrow{r}{ (f,g) } & \akashi{ (a',\phi',b') } \end{tikzcd} \,\Leftrightarrow \,
\begin{tikzcd} a \arrow{d}{f} \\ a' \end{tikzcd}, \begin{tikzcd} b \arrow{d}{g} \\ b' \end{tikzcd}, \mbox{ s.t. }
\begin{tikzcd}
	\blue{ \alpha a } \arrow{r}{\alpha f} \arrow[blue]{d}{\phi} & \akashi{ \alpha a' } \arrow[Akashi]{d}{\phi'} \\
	\blue{ \beta b } \arrow{r}{\beta g} & \akashi{ \beta b' }
\end{tikzcd}\]
This category $\Comma{\alpha}{\beta}$ may be interpreted as connections between the functors $\alpha, \beta$, via their common codomain $\calC$. Comma categories contain as sub-structures, the original categories $\calA, \calB$, via the \emph{forgetful} functors
\[\begin{tikzcd} \calA & \Comma{\alpha}{\beta} \arrow{l}[swap]{\Forget^{\Comma{\alpha}{\beta}}_1} \arrow{r}{\Forget^{\Comma{\alpha}{\beta}}_2} & \calB \end{tikzcd}\]
If the functors $\alpha, \beta$ are clear from context, then the left and right forgetful functors will simply be denoted as $\Forget_1, \Forget_2$ respectively. Their action on morphisms in $\Comma{\alpha}{\beta}$ can be described as
\[ \begin{tikzcd} \blue{a} \arrow[blue]{d}{f} \\ \blue{a'} \end{tikzcd}
\begin{tikzcd} {} & & {} \arrow[ll, "\Forget_1"'] \end{tikzcd}
\begin{tikzcd}
	\blue{ \alpha a } \arrow{r}{\alpha f} \arrow[blue]{d}{\phi} & \akashi{ \alpha a' } \arrow[Akashi]{d}{\phi'} \\
	\blue{ \beta b } \arrow{r}{\beta g} & \akashi{ \beta b' }
\end{tikzcd}
\begin{tikzcd} {} \arrow[rr, "\Forget_2"] & & {} \end{tikzcd}
\begin{tikzcd} \akashi{b} \arrow[Akashi]{d}{g} \\ \akashi{b'} \end{tikzcd} \]
Comma categories prevail all over category theory and mathematics. If either $\calA$ or $\calB$ is taken to be $\star$ the trivial category with a single object , then the resulting comma categories are called left and right \emph{slice-categories} respectively. If $\calA = \calB = \calC$, then the comma category becomes the \emph{arrow-category}. The objects here are the arrows in $\calC$, and the morphisms are commutation squares. Comma, slice and arrow categories thus represent finer structures present within categories. Comma categories are used to represent various compound objects in mathematics \cite[e.g.]{Das2024slice, Das2023CatEntropy, DasSuda2024recon, DasSuda2025enrich}. The objects of a comma category are essentially morphisms, with their domain and codomain sourced from different categories. We next see how nullity from a component of a comma category leads to nullity for the entire comma category.

\paragraph{Nullity for commas} We now show that the concept of nullity can be extended if the set-theoretic interpretation allowed by Assumption \ref{A:2} is available. Suppose there is a nullity structure $\calN$ on some category $\calX$, and there is a second category $\calX'$ creating the arrangement shown below on the left :
\begin{equation} \label{eqn:ArrSet:1}
	\begin{tikzcd}
		\calX \arrow[dr, bend left=20, "\gamma"'] & & \calX' \arrow[dl, "\gamma'", bend right=20] \\
		& \SetCat
	\end{tikzcd} \imply 
	\begin{tikzcd} \Comma{\gamma}{\gamma'} \arrow{d}[swap, Shobuj]{\Null_{\gamma, \gamma'}} \\ \Nullity \end{tikzcd} , \quad 
	\begin{tikzcd} \gamma A \arrow[d, "f"'] \\ \gamma' A' \end{tikzcd} \;\mapsto\; 
	\paran{ \begin{array}{c} \gamma' A' \\ \SetDef{a'\subseteq \gamma' A'}{ (\gamma f)^{-1}(a') \in \Null \paran{ \gamma A } } \end{array} }
\end{equation}
Then one has a nullity construct on the comma category, as indicated on the right above. If two $\calX$ and $\calX'$ objects $A, A'$ respectively are joined set-theoretically via a map $f$, then the nullity functor $\Null_{\gamma, \gamma'}$ creates a nullity structure for $A'$ induced via $f$ from the pre-existing nullity structure on $A$. The action of this nullity functor on morphisms is shown below :
\[\begin{tikzcd}
	A \arrow[r, "f", Akashi] \arrow[d, "\alpha"'] & A' \arrow[d, "\beta"] \\
	B \arrow[r, "f'"', Holud] & B'
\end{tikzcd}
\begin{tikzcd} {} \arrow[rr, mapsto, "\Null_{\gamma, \gamma'}"] && {} \end{tikzcd}
\begin{tikzcd}
	\akashi{ \SetDef{b \subseteq \gamma(A) }{ f^{-1}(b) \in \Null(a) } } \arrow[d, "\beta"] \\ 
	\Holud{ \SetDef{b' \subseteq \gamma(A') }{ f'{-1}(b') \in \Null(a') } }
\end{tikzcd}\]
The nullity functor on the comma category remains bound to the nullity on $\calX$ and $\gamma'$ in the following manner :
\begin{equation} \label{eqn:ArrSet:2}
	\begin{tikzcd}
		\Comma{\gamma}{\gamma'} \arrow[rrr, "\Null_{\gamma, \gamma'}", Shobuj] \arrow[d, "\Forget_2"'] &&& \Nullity \arrow[d, "\Forget"] \\
		\calX' \arrow[rrr, "\gamma'"] &&& \SetCat
	\end{tikzcd}
\end{equation}
The functor $\Null_{\gamma, \gamma'}$ will serve as  the key tool for extending a pre-existing notion of nullity from a category $\calX$ to a different and possibly completely incompatible category $\calX'$. All that is required is a comma category to bind them, with $\SetCat$ serving as an intermediary.
We next begin the categorical axiomatization of nullity. The construction in \eqref{eqn:ArrSet:1} will be indispensable in this analysis.

\section{Categorical axiomatization} \label{sec:cat_cnstrct}

The construction starts with the assumption

\begin{Assumption} \label{A:2}
	There is a category $\base$ to be interpreted as the base-category, equipped with a notion of nullity, i.e.,  a functor $\Null : \base \to \Nullity$.
\end{Assumption}

The construction of nullity has a starting arrangement $\base$ (finite dimensional Euclidean spaces) with an existing notion of nullity (Lebesgue nullity). There is also a final arrangement $\main$ (infinite dimensional affine spaces) to which the notion of nullity has been extended to. There is also an intermediate arrangement $\inter$ (infinite dimensional vector spaces) is required, by which the $\base$-objects can be compared to the $\main$-objects. The arrangement $\inter$ has the same object complexity as $\main$, but less morphism complexity. They provide a common space in which $\main$ objects may be paired with $\base$ objects. These pairings are precisely the probes. The next two assumption formalizes this :

\begin{Assumption} \label{A:3}
	There are two categories $\inter$ and $\main$, to be interpreted as an intermediate category and the main category, along with functors creating the following arrangement :  
	\begin{equation} \label{eqn:A:3_4}
		\begin{tikzcd}
			\base \arrow[rr, "j_2"] && \inter \arrow[drr, bend right=5, "="] \arrow[rr, "j_1"] && \main \arrow[d, "\pi"] \\
			&& && \inter
		\end{tikzcd}
	\end{equation}
\end{Assumption}

\begin{figure}[!t]
	\centering
	\begin{subfigure}[t]{0.48\linewidth}
		\centering
		\begin{tikzpicture} \node[draw, inner sep=5pt, draw=ChhaiD, line width=2pt] (box){ 
				\begin{tikzcd}
					\base \arrow[rr, "j_2"] && \inter \arrow[drr, bend right=5, "="] \arrow[rr, "j_1"] && \main \arrow[d, "\pi"] \\
					&& && \inter
				\end{tikzcd}
			}; \end{tikzpicture}        
		\caption{ Assumption \ref{A:3} -- Diagram \eqref{eqn:A:3_4}.}
		\label{fig:A:3_4:1}
	\end{subfigure} 
	\begin{subfigure}[t]{0.48\linewidth}
		\centering
		\begin{tikzpicture} \node[draw, inner sep=5pt, draw=ChhaiD, line width=2pt] (box){ 
				\begin{tikzcd} [scale cd = 1]
					\EucCat_{\text{mono}} \arrow[r, "j_2", "\subset"'] & \VectCat_{\text{mono}} \arrow[dr, bend right=5, "="'] \arrow[r, "j_1", "\subset"'] & \AffineCat_{\text{mono}} \arrow[d, "\pi"] \\
					& & \VectCat_{\text{mono}}
				\end{tikzcd}
			}; \end{tikzpicture}
		\caption{ See Example \ref{ex:preval}.}
		\label{fig:A:3_4:2}
	\end{subfigure} 
	
	\begin{subfigure}[t]{0.48\linewidth}
		\centering
		\begin{tikzpicture} \node[draw, inner sep=5pt, draw=ChhaiD, line width=2pt] (box){ 
				\begin{tikzcd} [scale cd = 1]
					\EucCat_{\text{mono}} \arrow[r, "j_2", "\subset"'] & \Poly{1} \arrow[dr, bend right=5, "="'] \arrow[r, "j_1", "\subset"'] & \JetEucZ{1} \arrow[d, "\Jet{1}\rvert_{0}"] \\
					& & \Poly{1}
				\end{tikzcd}
			}; \end{tikzpicture}
		\caption{See Example \ref{ex:JetEuc:1}.}
		\label{fig:A:3_4:4}
	\end{subfigure} 
	\begin{subfigure}[t]{0.48\linewidth}
		\centering
		\begin{tikzpicture} \node[draw, inner sep=5pt, draw=ChhaiD, line width=2pt] (box){ 
				\begin{tikzcd} [scale cd = 0.9]
					\JetEucZ{k} \arrow[r, "j_2", "\subset"'] & \Poly{k} \arrow[dr, bend right=5, "="'] \arrow[r, "j_1", "\subset"'] & \JetEucZ{k+1} \arrow[d, "\Jet{k}\rvert_{0}"] \\
					& & \Poly{k}
				\end{tikzcd}
			}; \end{tikzpicture}
		\caption{See Example \ref{ex:JetEuc:k}.}
		\label{fig:A:3_4:5}
	\end{subfigure} 

	\begin{subfigure}[t]{0.48\linewidth}
		\centering
		\begin{tikzpicture} \node[draw, inner sep=5pt, draw=ChhaiD, line width=2pt] (box){ 
				\begin{tikzcd} [scale cd = 0.9]
					\JetEucZ{k} / _{\sim} \arrow[d, "j_2", "="'] \\ 
					\JetEucZ{k} / _{\sim} \arrow[dr, bend right=5, "="'] \arrow[r, "j_1", "\subset"'] & \ManCat{k}_{pt} \arrow[d, "\Jet{k}"] \\
					& \JetEucZ{k} / _{\sim}
				\end{tikzcd}
			}; \end{tikzpicture}
		\caption{See Example \ref{ex:man_pt:k}}
		\label{fig:A:3_4:6}
	\end{subfigure} 
	\begin{subfigure}[t]{0.48\linewidth}
		\centering
		\begin{tikzpicture} \node[draw, inner sep=5pt, draw=ChhaiD, line width=2pt] (box){ 
				\begin{tikzcd} [scale cd = 0.9]
					\ManCat{k}_{pt} \arrow[d, "\proj"'] \\ 
					\ManCat{k} \arrow[dr, bend right=5, "="'] \arrow[r, "="] & \ManCat{k} \arrow[d, "="] \\
					& \ManCat{k}
				\end{tikzcd}
			}; \end{tikzpicture}
		\caption{See Example \ref{ex:man_pt:k}}
	\end{subfigure} 
	
	\begin{subfigure}[t]{0.48\linewidth}
		\centering
		\begin{tikzpicture} \node[draw, inner sep=5pt, draw=ChhaiD, line width=2pt] (box){ 
				\begin{tikzcd}
					\base \arrow[rr, "="] && \base \arrow[drr, bend right=5, "="] \arrow[rr, "="] && \base \arrow[d, "="] \\
					&& && \base
				\end{tikzcd}
			}; \end{tikzpicture}
		\caption{Special case of Sub-figure (a) in which $\base = \inter = \main$. See Definition \ref{def:sat_null}.}
		\label{fig:A:3_4:3}
	\end{subfigure} 
	\begin{subfigure}[t]{0.48\linewidth}
		\centering
		\begin{tikzpicture} \node[draw, inner sep=5pt, draw=ChhaiD, line width=2pt] (box){ 
				\begin{tikzcd}
					\base \arrow[rr, "j_2"] && \main \arrow[drr, bend right=5, "="] \arrow[rr, "="] && \main \arrow[d, "="] \\
					&& && \main
				\end{tikzcd}
			}; \end{tikzpicture}
		\caption{Special case of Sub-figure (a) in which $\inter = \main$. See equations \eqref{eqn:def:veeNull:1}, \eqref{eqn:def:veeNull:2}.}
		\label{fig:A:3_4:7}
	\end{subfigure} 
	\caption{Instances of diagram \eqref{eqn:A:3_4}. One of our main assumptions is the arrangement of categories and functors presented in the pattern of \eqref{eqn:A:3_4}. The pattern provides a means of lifting a nullity structure from the base category $\base$ to the main category $\main$. The panels show several instances of this pattern.  Note that these diagrams are independent of the existing nullity structure on $\base$, presented in \eqref{eqn:null:4}. A construction of nullity thus involves two independent choices for realizing diagrams \eqref{eqn:null:4} and \eqref{eqn:A:3_4}. }
	\label{fig:A:3_4}
\end{figure}
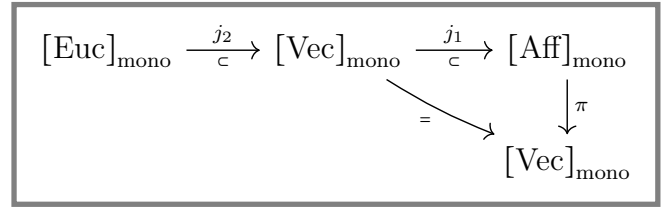
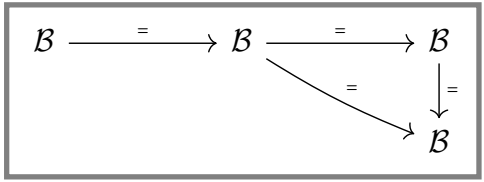
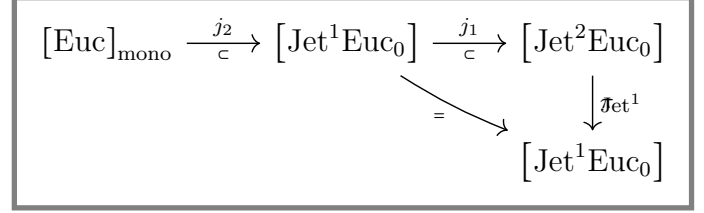
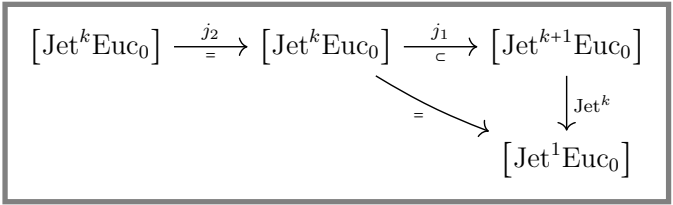
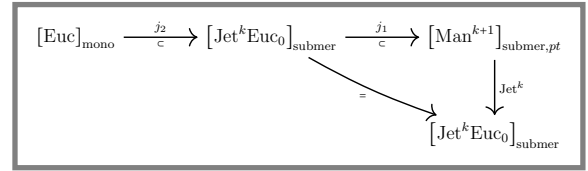

Figure \ref{fig:A:3_4} presents several instances of the abstract categorical diagram \eqref{eqn:A:3_4}. With this in mind we now re-examine the construction of prevalence, using the language of categories and functors. Now all the conclusions about the setup in Figure \ref{fig:A:3_4:2} will also hold for any setup satisfying Assumptions \ref{A:1}--\ref{A:3}.

\begin{enumerate}
	\item Consider any vector space $V$. It is an object of both $\Affine$ as well as $\VectCat$. We choose the former.
	\item A linear embedding $A : \real^d \to V$ corresponds to an object of the left-slice category :
	\[\left[ \begin{tikzcd}[scale cd = 0.6] \EucCat_{\text{mono}} \arrow[dr, bend right=20, "j_2"] & & \AffineCat_{\text{mono}} \arrow[dl, bend left=20, "\pi"'] \\ & \VectCat_{\text{mono}} \end{tikzcd} \right]
	\begin{tikzcd} {} \arrow[rr, mapsto] && {} \end{tikzcd}
	\left[ \begin{tikzcd}[scale cd = 0.6] \base \arrow[dr, bend right=20, "j_2"] & & \main \arrow[dl, bend left=20, "\pi"'] \\ & \inter \end{tikzcd} \right] =
	\Comma{j_2}{\pi} \]
	\item An affine map $T : \real^d \to V$ with an injective linear part corresponds to an object of the left-slice category :
	\[ \left[ \begin{tikzcd}[scale cd = 0.6] \EucCat_{\text{mono}} \arrow[dr, "j_1 j_2"', bend right=10] & & \AffineCat_{\text{mono}} \arrow[dl, "=", bend left=10] \\ 
		& \AffineCat_{\text{mono}} \end{tikzcd} \right]
	\begin{tikzcd} {} \arrow[rr, mapsto] && {} \end{tikzcd}
	\left[ \begin{tikzcd}[scale cd = 0.6] \base \arrow[dr, "j_1 j_2"', bend right=10] & & \main \arrow[dl, "=", bend left=10] \\ 
		& \main \end{tikzcd} \right] =
	\Comma{j_1 j_2}{ \main } . \]
	\item An affine map projects into a linear map. This is borne by the following functor between comma categories
	\begin{equation} \label{eqn:def:pi_star}
		\begin{tikzcd}
			\base \arrow[d, "="'] \arrow[r, "j_1 j_2"] & \main \arrow[d, "\pi"'] & \main \arrow[l, "="'] \arrow[d, "="] \\
			\base \arrow[r, "j_2"] & \inter & \main \arrow[l, "\pi"'] 
		\end{tikzcd} \imply 
		\begin{tikzcd} \Comma{j_1 j_2}{ \main } \arrow[Shobuj, "\pi_*", d] \\ \Comma{j_2}{\pi} \end{tikzcd}
	\end{equation}
	The two rows on the diagram on the left correspond to the two comma categories indicated on the diagram on the right. The induced functor $\pi_*$ is explained later in Lemma \ref{lem:oh9d}.
	\item Finally each of the objects in $\Comma{j_1 j_2}{ \main }$ are also set-maps. Consider the commuting diagram below on the left 
	\begin{equation} \label{eqn:def:N_I_B}
		\begin{tikzcd}
			\base \arrow[d, "="'] \arrow[r, "j_1 j_2"] & \main \arrow[d, "\gamma"'] & \main \arrow[l, "="'] \arrow[d, "="] \\
			\base \arrow[r, "\gamma j_1 j_2"] & \SetCat & \main \arrow[l, "\gamma"'] 
		\end{tikzcd} \imply 
		\begin{tikzcd}
			\Comma{j_1 j_2}{ \main } \arrow[Shobuj, dashed, drrr, "\Null_{\inter, \base}"] \arrow[d] \\
			\Comma{\gamma j_1 j_2}{ \gamma } \arrow[rrr, "\Nullity_{j_1 j_2, \main}"'] &&& \Nullity
		\end{tikzcd}
	\end{equation}
	The top and bottom rows correspond to two comma categories. Such a commutation leads to a functor between the comma categories, displayed as the unlabeled arrow in the diagram on the right. The bottom horizontal arrow is created using the construction in \eqref{eqn:ArrSet:1}. The composition of these two functors leads to a nullity structure on $\Comma{j_1 j_2}{ \main }$.
	\item Diagram \eqref{eqn:def:N_I_B} thus states that every affine embedding $T:\real^d \to V$ is related functorially to a nullity structure on $V$.
\end{enumerate}

We now have all the ingredients for re-inventing the construction of prevalence as presented in \eqref{eqn:prevalence:6} and Figure \ref{fig:prevalence:1}. The construction is done using a powerful tool from Category theory.

\paragraph{Kan extensions} Kan extensions \citep[e.g.]{perrone2022kan, street2004categorical, Riehl_homotopy_2014} are universal constructions which generalize the practice of taking partial minima or maxima, in a functorial manner.Consider the following arrangement :
\begin{equation} \label{eqn:dpp3k}
	\begin{tikzcd}
		X \arrow{r}{F} \arrow{d}[swap]{K} & E \\ D
	\end{tikzcd}
\end{equation}
of functors and categories. A \emph{left Kan extension} or \emph{right envelope} of $F$ along $K$ is a functor $\psi : D\to E$ along with a minimum natural transformation $\eta : F\Rightarrow \psi \circ K$. With a slight departure from usual convention, we denote this functor $\psi$ as $\REnv{K}{F}$. This pair $\paran{ \REnv{K}{F}, \eta }$  is also minimum / universal in the sense that for every other functor $H:D\to E$ along with a natural transformation $\gamma : F \Rightarrow H\circ K$, there is a natural transformation $\tilde\gamma : \REnv{K}{F} \Rightarrow H$ s.t. $\gamma = \paran{ \tilde\gamma \star \Id_{K} } \circ \eta$.  This is shown in the diagram below.
\[ \mathcal{L} := \REnv{K}{F}, \quad 
\begin{tikzcd}
	& & & E \\
	E & X \arrow[dashed, bend left = 10]{urr}[name=x1]{} \arrow[dashed]{drr}[name=x2]{} \arrow{l}[name=F]{F} \arrow{r}{K} & D \arrow{dr}[name=H]{H} \arrow{ur}[swap, name=L]{ \mathcal{L} } & \\
	& & & E 
	\arrow[shorten <=2pt, shorten >=3pt, Rightarrow, to path={(F) to[out=90,in=180] (x1)} ]{ }
	\arrow[shorten <=2pt, shorten >=3pt, Rightarrow, to path={(F) to[out=-90,in=225] (x2)} ]{ }
	\arrow[shorten <=1pt, shorten >=1pt, Rightarrow, to path={(L) to[out=-45,in=45] (H)} ]{ }
\end{tikzcd}\]
One can similarly define a \emph{right Kan extension} or \emph{left-envelope} of $F$ along $K$. It is a functor $\LEnv{K}{F} : D\to E$ along with a natural transformation $\epsilon : \LEnv{K}{F} \circ K \Rightarrow F$. Moreover, this pair $\paran{\LEnv{K}{F}, \epsilon}$ is maximum / universal in the sense that for every other functor $H:D\to E$ along with a natural transformation $\gamma : H\circ K \Rightarrow F$, there is a natural transformation $\tilde{\gamma} : H \Rightarrow \LEnv{K}{H}$ such that $\gamma = \epsilon\circ \paran{ \tilde{K} \star \Id_K }$. This is shown in the diagram below.
\[ \mathcal{R} := \LEnv{K}{F}, \quad 
\begin{tikzcd}
	& & & E \\
	E & X \arrow[dashed, bend left = 10]{urr}[name=x1]{} \arrow[dashed]{drr}[name=x2]{} \arrow{l}[name=F]{F} \arrow{r}{K} & D \arrow{dr}[name=H]{H} \arrow{ur}[swap, name=L]{ \mathcal{R} } & \\
	& & & E 
	\arrow[shorten <=2pt, shorten >=3pt, Rightarrow, to path={(x1) to[out=135,in=90] (F)} ]{ }
	\arrow[shorten <=2pt, shorten >=3pt, Rightarrow, to path={(x2) to[out=225,in=-90] (F)} ]{ }
	\arrow[shorten <=1pt, shorten >=1pt, Rightarrow, to path={(H) to[out=45,in=-45] (L)} ]{ }
\end{tikzcd}\]
The act of finding limits or colimits is analogous to finding the minimum or maximum under this constraint. Many constructions in mathematics which are analogous to constrained optimizations, can be succinctly expressed in the language of Kan extensions.  

\paragraph{Max-min optimization} Using the language of Kan extensions, we make the following Kan extensions using the functors $\pi_*$ and $\Null_{\inter, \base}$ constructed above :
\begin{equation} \label{eqn:nullity_construct}
	\begin{tikzcd} [ scale cd = 1.5, column sep = large]
		\Comma{j_1 j_2}{ \main } \arrow[dd, "\pi_*"', Holud ] \arrow[Akashi]{rrrr}{\Null_{\inter, \base}}[swap, name=n1]{} \arrow[Holud, dashed, bend right=10]{ddrrrr}[name=n2]{} &&&& \Nullity \\
		\\
		\Comma{j_2}{\pi} \arrow[dd, "\Forget_2"'] \arrow[Holud]{rrrr}[name=n3, pos=0.2]{ \text{probed}-\Null}[name=n3, swap]{ \LEnv{ \pi_* }{ \Null_{\inter, \base} } } \arrow[Shobuj, dashed, bend right=10]{ddrrrr}[name=n4]{} &&&& \Nullity \\
		\\
		\main \arrow[Shobuj]{rrrr}[pos=0.2]{\Null}[swap]{ \REnv{ \Forget_2 }{ \LEnv{ \pi_* }{ \Null_{\inter, \base} } } } &&&& \Nullity
		\arrow[shorten <=1pt, shorten >=1pt, Rightarrow, to path={(n3) to[in=90,out=-90] (n4)} ]{ }
		\arrow[shorten <=1pt, shorten >=1pt, Rightarrow, to path={(n2) to[in=-90,out=45] (n1)} ]{ }
	\end{tikzcd}
\end{equation}
The middle arrow in yellow, achieves the $\main$-invariance of nullity by virtue of being a left-envelope (i.e. right Kan extension). However it is not a nullity structure on $\main$ itself, but on morphisms sourced from $\base$-objects via $\inter$. Borrowing the terminology from \cite{HSYprevalence1992}, we call such a morphism a \emph{probe}. Thus this notion of nullity is tied to a choice of a probe object, and we call this a \textit{probed notion of nullity}. The lowermost arrow in green, represents the construction of nullity for the main category $\main$. By virtue of being a right-envelope (i.e. left Kan extension), it is the union of all probed nullities. In other words it is the minimal nullity structure that contains the nullity structure produced by all the probes. Figure \ref{fig:prevalence:2} retraces the steps in this abstract categorical construction to relate it to the familiar construction of prevalence.

The extension of nullity was done solely using collection of morphisms available within the arrow category $\Comma{j_2}{\pi}$. Thus although intuitively we interpret nullity as a pre-existing structure and nullity-preservation as a derived property, this categorical construction assumes the morphisms as a primitive notion, and optimizes the nullity structure that would be preserved. This is in principle similar to several approaches to categorification, such as quasi-Borel spaces \cite{HeunenEtAl2017cnvnt},
Chen spaces \cite{Chen1977iterated} and diffeological spaces \cite{Stacey2008smooth, BaezHoffnung2011cnvnnt}.

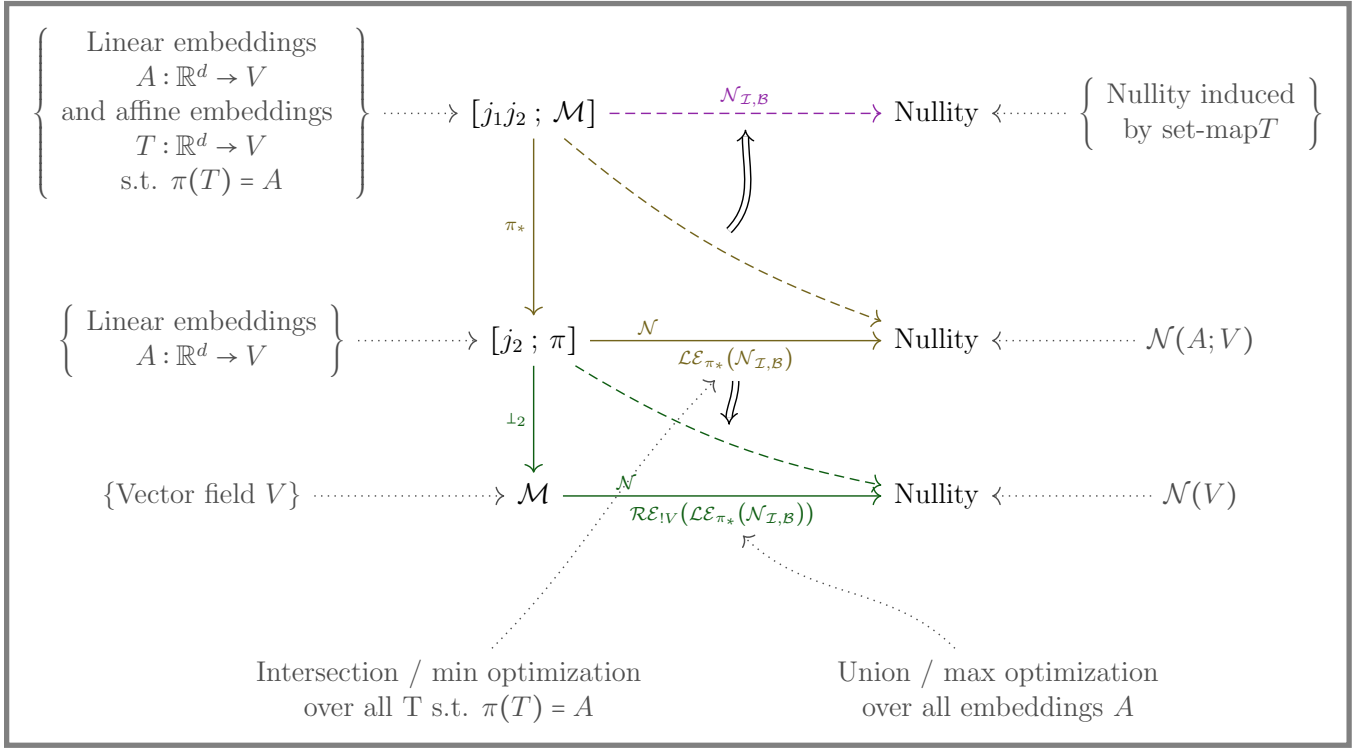
\begin{figure}[!t]
	\centering
	\begin{tikzpicture} \node[draw, inner sep=5pt, draw=ChhaiD, line width=2pt] (box){ 
			\begin{tikzcd} [row sep = large, scale cd = 0.9]
				\chhaiB{ \braces{ \begin{array}{c} \mbox{Linear embeddings} \\ A : \real^d \to V \\ \mbox{and affine embeddings }  \\ T : \real^d \to V \\ \mbox{s.t. } \pi(T) = A  \end{array} } } \arrow[ChhaiB, dotted, r] & \Comma{j_1 j_2}{ \main } \arrow[d, "\pi_*"', Holud ] \arrow[Akashi, dashed]{rrr}{\Null_{\inter, \base}}[swap, name=n1]{} \arrow[Holud, dashed, bend right=10]{drrr}[name=n2]{} &&& \Nullity & \chhaiB{ \braces{ \begin{array}{c} \mbox{Nullity induced} \\ \mbox{by set-map } T \end{array} } } \arrow[ChhaiB, dotted, l] \\
				\chhaiB{ \braces{ \begin{array}{c} \mbox{Linear embeddings} \\ A : \real^d \to V \end{array} } } \arrow[ChhaiB, dotted, r] & \Comma{j_2}{\pi} \arrow[d, Shobuj, "\Forget_2"'] \arrow[Holud]{rrr}[name=n3, pos=0.2]{\Null}[name=n3, swap]{ \LEnv{ \pi_* }{ \Null_{\inter, \base} } } \arrow[Shobuj, dashed, bend right=10]{drrr}[name=n4]{} &&& \Nullity & \chhaiB{ \Null(A; V) } \arrow[ChhaiB, dotted, l] \\
				\chhaiB{ \braces{ \mbox{Vector field } V } } \arrow[ChhaiB, dotted, r] & \main \arrow[Shobuj]{rrr}[pos=0.3]{\Null}[swap, name=n7]{ \REnv{ !V }{ \LEnv{ \pi_* }{ \Null_{\inter, \base} } } } &&& \Nullity & \chhaiB{ \Null(V) } \arrow[ChhaiB, dotted, l] \\
				\\
				{} \arrow[phantom, ChhaiB]{rr}[name=n5]{\begin{array}{c} \mbox{Intersection / min optimization} \\ \mbox{over all T s.t. } \pi(T)=A \end{array}} && {} & {} \arrow[phantom, ChhaiB]{rr}[name=n6]{\begin{array}{c} \mbox{Union / max optimization} \\ \mbox{over all embeddings } A \end{array}} && {}
				\arrow[shorten <=1pt, shorten >=1pt, Rightarrow, to path={(n3) to[in=90,out=-90] (n4)} ]{ }
				\arrow[shorten <=1pt, shorten >=1pt, Rightarrow, to path={(n2) to[in=-90,out=45] (n1)} ]{ }
				\arrow[shorten <=1pt, shorten >=1pt, dotted, ChhaiB, to path={(n5) to[in=225,out=45] (n3)} ]{ }
				\arrow[shorten <=1pt, shorten >=1pt, dotted, ChhaiB, to path={(n6) to[in=-45,out=135] (n7)} ]{ }
			\end{tikzcd}
		}; \end{tikzpicture}
	\caption{Categorical explanation of prevalence. Figure \ref{fig:prevalence:1} presented a flowchart outlining the construction the notion of prevalence \eqref{eqn:prevalence:6}. The categorical construction in \eqref{eqn:nullity_construct} has been superimposed om that flowchart. The various collections in Figure \ref{fig:prevalence:1} can now be related to various categories. The correspondences between the collections turn out to be functors. The construction of prevalence is done using a successive max / min optimizations. These optimization steps are shown to correspond to Kan extensions. This categorical interpretation completely dissociates the construction of [revalence] from the context to vectors spaces and affine maps. It therefore becomes applicable to any other context, that satisfies certain structural or categorical assumptions (\ref{A:1}--\ref{A:3}). }
	\label{fig:prevalence:2} 
\end{figure}

This completes a categorical redefinition of prevalence and shy sets, in an abstract categorical setting. The three ingredients are -- an interpretation $\gamma$ of objects from the main category $\main$ as sets (Assumption \ref{A:1}); a pre-existing notion of nullity on the base category $\base$ (Assumption \ref{A:2}); and finally the arrangement in \eqref{eqn:A:3_4} (Assumption \ref{A:3}). The terms of Assumption \ref{A:1} are fixed. A construction of nullity thus involves two independent choices of diagrams \eqref{eqn:null:4} and \eqref{eqn:A:3_4}. Figure \ref{fig:A:3_4} presents several instances of these assumptions which lead to other notions of nullity.

\begin{example} [Prevalence] \label{ex:preval}
	As declared before, prevalence is the special case of \eqref{eqn:nullity_construct} displayed in Figure \ref{fig:A:3_4:2}. The base notion of nullity which is used is given in Example \ref{ex:1}. Figure \ref{fig:prevalence:2} elaborates this connection.
\end{example}

Note that the notion of nullity is not tied to a choice of measure. It is ultimately a down-set of subsets. Multiple translation invariant measures have been proposed for infinite dimensional Banach spaces \cite[e.g.]{gill2013ordinary, Pantsulaia2008generators, vershik2007does, kakutani1950construction, baker1991lebesgue} whose null sets coincide with the null sets resulting from Example \ref{ex:preval}.  We next examine the mathematical consequences of the construction in \eqref{eqn:nullity_construct}.

\section{Main results} \label{sec:results}

The most trivial consequence of the categorical nature of our constructions is :

\begin{theorem} [Invariance of Nullity] \label{thm:1}
	Let Assumptions \ref{A:1}, \ref{A:2} and \ref{A:3} hold. Then the nullity created for $\main$ using the construction \eqref{eqn:nullity_construct} is invariant under the endomorphisms of $\main$.
\end{theorem}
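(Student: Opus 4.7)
} The plan is to show that invariance is a direct consequence of the functoriality of the construction \eqref{eqn:nullity_construct}, so the work reduces to verifying that each stage of that construction really is a functor into $\Nullity$. Once this is in hand, the morphism condition \eqref{eqn:mrphsm:1} built into the definition of $\Nullity$ will deliver the invariance claim immediately.

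First I would assemble the pieces of \eqref{eqn:nullity_construct} as functors. The map $\pi_* : \Comma{j_1 j_2}{\main} \to \Comma{j_2}{\pi}$ is a functor by Lemma \ref{lem:oh9d} (it is induced by the commutation on the left of \eqref{eqn:def:pi_star}), and $\Forget_2 : \Comma{j_2}{\pi} \to \main$ is a standard forgetful functor. The functor $\Null_{\inter, \base}$ is obtained from the construction \eqref{eqn:ArrSet:1} together with Assumption \ref{A:2}. The Kan extensions $\LEnv{\pi_*}{\Null_{\inter, \base}}$ and $\REnv{\Forget_2}{\,\cdot\,}$ are by definition functors into $\Nullity$, provided they exist. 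Existence is the one technical point: since $\Nullity$ is essentially a preorder whose objects carry down-sets closed under arbitrary unions, the pointwise formula for Kan extensions (intersection of the relevant diagram for $\LEnv$, union for $\REnv$) lands inside $\Nullity$ and can be checked to be functorial. Thus the bottom row of \eqref{eqn:nullity_construct} produces a genuine functor
\[ \Null \;:=\; \REnv{\Forget_2}{\LEnv{\pi_*}{\Null_{\inter, \base}}} \;:\; \main \longrightarrow \Nullity, \]
and by construction the triangle \eqref{eqn:null:4} with $\gamma$ commutes.

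Now fix an object $V$ of $\main$ and any endomorphism $f \in \Hom_{\main}(V,V)$. Functoriality of $\Null$ produces a morphism $\Null(f)$ in $\Nullity$ from $\paran{ \gamma(V), \Null(V) }$ to itself whose underlying set-map is $\gamma(f)$. By the defining condition \eqref{eqn:mrphsm:1} of morphisms in $\Nullity$, this forces
\[ \gamma(f)(S) \in \Null(V), \quad \forall S \in \Null(V), \]
which is exactly the statement that the down-set $\Null(V) \subseteq 2^{\gamma(V)}$ is invariant under the forward image of every endomorphism of $V$. Specializing to the prevalence setup of Example \ref{ex:preval} recovers the translation invariance asserted in Corollary \ref{corr:1}.

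The main obstacle, as foreshadowed, is confirming existence and the pointwise formula for the two Kan extensions in the category $\Nullity$, rather than any subtlety of the invariance argument itself. Because $\Nullity$ inherits the necessary (co)completeness from the power-set preorders lying beneath each object, this step reduces to a routine verification that the candidate unions and intersections of down-sets remain down-sets and interact correctly with $\gamma(f)$; the substantive content of the theorem lies entirely in the categorical bookkeeping that packages \eqref{eqn:nullity_construct} as a single functor.
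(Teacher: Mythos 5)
Your proposal is correct and follows essentially the same route as the paper: the paper likewise deduces invariance directly from the fact that \eqref{eqn:nullity_construct} packages nullity as a functor $\Null : \main \to \Nullity$, so every endomorphism of an object of $\main$ is carried to a $\Nullity$-morphism, and the morphism condition \eqref{eqn:mrphsm:1} is precisely preservation of the null collection. Your additional care about the existence and pointwise description of the two Kan extensions is detail the paper leaves implicit, not a different approach.
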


The invariance follows from our interpretation in Definition \ref{def:null:4} of nullity as a functor, and the categorical structure of both $\main$ and $\Nullity$. The next important property to establish is uniqueness. Uniqueness can be established on the basis of the following desirable property :

\begin{definition} [Testability] \label{def:probe_resist}
	Let $\Null$ be a nullity construct according to Definition \ref{def:null:3} of an object $V$ of $\main$. This nullity is said to be testable if there is a probe object $\phi : j_2(b) \to \pi(V)$ in $\inter$ such that the push-forward of $\Null(b)$ under $\phi$ is a sub-collection of $\Null(V)$.
\end{definition}

Thus if a nullity construct is testable, a sub-collection of its null sets can be partially verified by null sets from a probe object sent from the base category. This is in fact the implicit motivation behind the notion of "prevalence".

\begin{theorem} [Nullity is unique] \label{thm:3}
	Let Assumptions \ref{A:1}, \ref{A:2} and \ref{A:3} hold. Let $\tilde{\Null}$ be a nullity construct for objects of $\main$,s satisfying the following two criterion
	\begin{enumerate} [(i)]
		\item $\tilde{\Null}$ is preserved under morphisms in $\main$;
		\item $\tilde{\Null}$ is testable.
	\end{enumerate}
	Then $\Null(V)$ constructed from \eqref{eqn:nullity_construct} is a sub-collection of $\tilde{\Null}(V)$.
\end{theorem}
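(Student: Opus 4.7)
My plan is to exploit, stage by stage, the two universal properties built into the construction \eqref{eqn:nullity_construct}. A morphism in $\Nullity$ whose underlying set-component is an identity is precisely an inclusion of nullity sub-structures, so $\Null(V)\subseteq \tilde{\Null}(V)$ for every $V\in\main$ is equivalent to a natural transformation $\Null\Rightarrow \tilde{\Null}$ with identity set-components; hypothesis (i) supplies automatic naturality on morphisms of $\main$. Writing $\mathcal{M} := \LEnv{\pi_*}{\Null_{\inter,\base}}$, the outer left Kan extension $\Null = \REnv{\Forget_2}{\mathcal{M}}$ has the universal property that any natural transformation $\mathcal{M}\Rightarrow \tilde{\Null}\circ \Forget_2$ factors through a unique natural transformation $\Null\Rightarrow \tilde{\Null}$. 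Hence the entire task reduces to producing this intermediate natural transformation.

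Unwinding the inner right Kan extension $\mathcal{M}$ as a fibrewise limit, the intermediate natural transformation amounts pointwise to the inclusion
\[ \mathcal{M}(A) \;=\; \bigcap_{T\,:\,\pi_*(T)=A} \Null_{\inter,\base}(T) \;\subseteq\; \tilde{\Null}(V), \qquad V := \Forget_2(A), \]
as $A$ ranges over $\Comma{j_2}{\pi}$. Fix such an $A$ and an $S\in\mathcal{M}(A)$; by definition $T^{-1}(S)\in\Null(b)$ for every lift $T : j_1 j_2(b)\to V$ of $A$ through $\pi$. Hypothesis (ii) furnishes at least one probe $\hat T$ for $V$ with $\hat T_*\Null(b)\subseteq \tilde{\Null}(V)$; hypothesis (i), applied to automorphisms of $V$ in $\main$ that transport $\hat T$ onto any prescribed lift $T$, promotes this to $T_*\Null(b)\subseteq \tilde{\Null}(V)$ for every lift $T$ in the fibre. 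Evaluating at the null set $T^{-1}(S)\in\Null(b)$ yields the slice $T(T^{-1}(S)) = S\cap T(\gamma(j_1 j_2(b))) \in \tilde{\Null}(V)$, and letting $T$ vary — geometrically, sliding the probe-image throughout $V$ by the translation degree of freedom in the $\pi_*$-fibre — these slices cover $S$. Assembling them gives $S\in \tilde{\Null}(V)$, and the outer universal property then upgrades the collection of inclusions into the desired $\Null\Rightarrow \tilde{\Null}$.

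The hard part will be the assembly step in which one passes from the slice-wise inclusions $S\cap T(\gamma(j_1 j_2(b))) \in \tilde{\Null}(V)$ to $S$ itself, because Definition~\ref{def:null:1} only requires a nullity structure to be closed downward under inclusion, not closed under unions. I see two complementary ways to address this. First, in every concrete instance of Figure~\ref{fig:A:3_4} the nullity structures $\tilde{\Null}(V)$ are closed under countable unions, and translation-invariance lets us countably parameterize the slices, so the step goes through. Second, and more structurally, one can argue by contradiction: if $S\in\mathcal{M}(A)$ carried any residue outside $\bigcup_T T(\gamma(j_1 j_2(b)))$, a suitable translation-lift $T$ of $A$ would pull a positive-$\Null(b)$ portion of that residue into $b$, contradicting $S\in\mathcal{M}(A)$. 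This residue-elimination — precisely the geometric reason the fibrewise intersection in the definition of $\mathcal{M}$ is sharp, mirroring the sharpness of \eqref{eqn:prevalence:1} — renders the union step vacuous, and completes the reduction to the two successive applications of universality described above.
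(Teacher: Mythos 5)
Your opening reduction is exactly the paper's argument: condition (i) makes $\tilde{\Null}$ a nullity functor on $\main$ in the sense of Definition \ref{def:null:4}, and the universal property of the outer left Kan extension $\Null=\REnv{\Forget_2}{\LEnv{\pi_*}{\Null_{\inter,\base}}}$ converts any natural transformation $\LEnv{\pi_*}{\Null_{\inter,\base}}\Rightarrow\tilde{\Null}\circ\Forget_2$ into a transformation $\Null\Rightarrow\tilde{\Null}$, whose components over identity set-maps are precisely the inclusions $\Null(V)\subseteq\tilde{\Null}(V)$. The paper stops essentially there: it reads condition (ii) as the statement that $\tilde{\Null}(V)$ already contains the probed nullity (the fibrewise intersection) at some probe, so the theorem is a direct reading of the two successive Kan extensions.

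The genuine gap is in your attempt to manufacture that intermediate transformation from the literal Definition \ref{def:probe_resist}, which only gives the push-forward of $\Null(b)$ under a single probe $\hat T$. Your slice-and-assemble step must pass from ``$S\cap T(\gamma(j_1 j_2\, b))\in\tilde{\Null}(V)$ for every lift $T$'' to ``$S\in\tilde{\Null}(V)$'', but a nullity structure is by Definition \ref{def:null:1} only a down-set: it carries no closure under unions, let alone the uncountable fibre-indexed union arising here. Your first patch fails already in the prevalence model: a shy set (for instance a closed subspace of infinite codimension) is not contained in countably many translates of a finite-dimensional probe, so the slices cannot be countably parameterized; and closure under arbitrary unions is absurd for any nullity notion, since singletons are null. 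Your second patch attacks the wrong obstruction: there is no ``residue outside $\bigcup_T T(\cdot)$'' to eliminate, because the translates of the probe image cover all of $V$; the difficulty is that coverage by null slices does not confer nullity (every subset of $\real^2$ is covered by Lebesgue-null lines). In addition, the step transporting $\hat T$ onto an arbitrary lift $T$ by an automorphism of $V$ silently assumes that $\main$-automorphisms act transitively on the $\pi_*$-fibre -- true for affine translations, but not granted by Assumptions \ref{A:1}--\ref{A:3}. In short, under the weak literal reading of testability your union step cannot be repaired without extra closure or homogeneity hypotheses; under the paper's reading (containment of the probed, i.e.\ fibrewise-intersected, nullity at some probe) your first paragraph already finishes the proof and the entire second half is unnecessary.
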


Condition (i) of Theorem \ref{thm:3} is just the condition for functoriality. Condition (ii) can be interpreted as $\tilde{\Null}(V)$ containing the probed nullity structure of some probe. Theorem \ref{thm:3} is thus a direct interpretation of the successive Kan extensions involved in the construction of Nullity. The categorical construction provides the simplest or coarsest notion of nullity satisfying the conditions of Theorem \ref{thm:3}. The next property to be expected from \eqref{eqn:nullity_construct} is that the newly constructed nullity functor on $\main$ be an extension of the existing nullity on $\base$. There is however an a basic obstruction to this happening. This will come to light once we examine some simpler instances of Assumption \ref{A:3}.

\paragraph{Nullity from a single functor} Consider the particular case of Assumption \ref{A:3} displayed in Figure \ref{fig:A:3_4:7}. In that case \eqref{eqn:nullity_construct} takes the form :
\begin{equation} \label{eqn:def:veeNull:1}
	\begin{tikzcd} [ row sep = large]
		\Comma{j_2}{ \main } \arrow[d, "="' ] \arrow{rrrr}{\Null_{j_2, \main}}[swap, name=n1]{} \arrow[dashed, bend right=10]{drrrr}[name=n2]{} &&&& \Nullity \\
		\Comma{j_2}{ \main } \arrow[d, "\Forget_2"'] \arrow{rrrr}[name=n3, pos=0.2]{ \Null_{j_2, \main}}[name=n3, swap]{} \arrow[dashed, bend right=10]{drrrr}[name=n4]{} &&&& \Nullity \\
		\main \arrow{rrrr}[pos=0.2]{\vee \Null^{\base}}[swap]{ \REnv{ \Forget_2 }{ \Null_{j_2, \main} } } &&&& \Nullity
		\arrow[shorten <=1pt, shorten >=1pt, Rightarrow, to path={(n3) to[in=90,out=-90] (n4)} ]{ }
		\arrow[shorten <=1pt, shorten >=1pt, Rightarrow, to path={(n2) to[in=-90,out=45] (n1)} ]{ }
	\end{tikzcd}
\end{equation}
This construct has the following pointwise definition --
\begin{equation} \label{eqn:def:veeNull:2}
	\begin{split}
		\vee \Null^{\base} (A) &= \bigcup \SetDef{ \Null_{j_2, \main}(\phi) }{ \phi : j_1(A') \to A } \\
		&= \bigcup \SetDef{ a\subseteq \gamma(A) }{ \exists \phi : j_2(A') \to A, \, (\gamma \phi)^{-1}(a) \in \Null(A') }
	\end{split}
\end{equation}
Thus just three ingredients -- a pre-existing nulliy structure $\Null^{(\base)}$ on $\base$; a functor $j_2 : \base \to \main$; and a set-functor $\gamma : \main\to \SetCat$ leads to a nullity structure for $\main$. We next examine an even simpler instance of \eqref{eqn:def:veeNull:1}, when all $\base, \inter, \main$ are the same.

\paragraph{Enriching nullity structure} Consider the particular case of Assumption \ref{A:3} displayed in Figure \ref{fig:A:3_4:3}. In that case \eqref{eqn:nullity_construct} as well as \eqref{eqn:def:veeNull:1} takes the form :
\begin{equation} \label{eqn:def:Bar_Null:1}
	\begin{tikzcd} [row sep = large]
		\ArrowCat{\base} \arrow[d, "="' ] \arrow{rrrr}{ \Null_{\base, \base} }[swap, name=n1]{} \arrow[dashed, bend right=10]{drrrr}[name=n2]{} &&&& \Nullity \\
		\ArrowCat{\base} \arrow[d, "\Forget_2"'] \arrow{rrrr}[name=n3, pos=0.2]{ \Null_{\base, \base} }[name=n3, swap]{ } \arrow[dashed, bend right=10]{drrrr}[name=n4]{} &&&& \Nullity \\
		\base \arrow{rrrr}[pos=0.2]{\overline{\Null}} &&&& \Nullity
		\arrow[shorten <=1pt, shorten >=1pt, Rightarrow, to path={(n3) to[in=90,out=-90] (n4)} ]{ }
		\arrow[shorten <=1pt, shorten >=1pt, Rightarrow, to path={(n2) to[in=-90,out=45] (n1)} ]{ }
	\end{tikzcd}
\end{equation}
Note that the induced map $\pi_*$ is just an identity between comma categories. As a result, the first right envelope is just the original functor. The new creation is the functor $\overline{\Null}$. Its action on morphisms of $\base$ can be formulated explicitly as
\begin{equation} \label{eqn:def:Bar_Null:2}
	\begin{split}
		\overline{\Null}(A) &= \bigcup \SetDef{ \Null_{\base, \base}(\phi) }{ \phi : A' \to A } \\
		&= \bigcup \SetDef{ a\subseteq \gamma(A) }{ \exists \phi : A' \to A, \mbox{ s.t. } (\gamma \phi)^{-1}(a) \in \Null(A') }
	\end{split}
\end{equation}
Note that $\overline{\Null}$ is a superset of $\Null$. Take any $a\in \Null(A)$, and take $\phi$ to be the identity $\Id_A : A\to A$. Then by \eqref{eqn:def:Bar_Null:2} $a$ lies in $\overline{\Null}(A)$ too. In general one cannot expect $\overline{\Null}$ to coincide with $\Null$.

\begin{definition} [Saturated nullity structure] \label{def:sat_null}
	Suppose Assumptions \ref{A:1} and \ref{A:2} hold. Then the nullity structure is said to be saturated if the functor $\overline{\Null}$ from \eqref{eqn:def:Bar_Null:1}, \eqref{eqn:def:Bar_Null:2} coincides with $\Null$.
\end{definition}

To prove our main result, we need an assumption based on the following inclusion functor :
\begin{equation} \label{eqn:null_ext:7}
	\begin{tikzcd}
		\base \arrow[r, "="] \arrow[d, "="] & \base \arrow[d, "j_2"] & \base \arrow[l, "="'] \arrow[d, "j_2"] \\
		\base \arrow[r, "j_2"'] & \inter & \inter \arrow[l, "="]
	\end{tikzcd} \imply 
	\begin{tikzcd} \ArrowCat{\base} \arrow[d, "\incl_3", Shobuj] \\ \Comma{j_2}{\inter} \end{tikzcd}
\end{equation}
The inclusion $\incl_3$ formalizes the intuition that arrows in $\base$ are also arrows in $\inter$ from a $\base$ object to an $\inter$-object. We assume the following about $\incl_3$ :

\begin{Assumption} \label{A:4} 
	The functor $\incl_3$ defined in \eqref{eqn:null_ext:7} has a post right adjoint $\incl_3^{R*} : \Comma{j_2}{\inter} \to \ArrowCat{\base}$.
\end{Assumption}

The notion of a \textit{post-right adjoint}, exlplained in Section \ref{sec:appendix:Kan}, is a generalization of being a right inverse. In the instance of Figure \ref{fig:A:3_4:2}, $\incl_3^{R*}$ is simply the functor which assigns every linear embedding $A : \real^d \to V$ the linear map $A : \real^d \to \ran(A)$ between finite dimensional vector spaces. Thus in the case of prevalence, $\incl_3^{R*}$ is actually a right inverse of $\incl_3$. We will find use for one more assumption on the arrangement in \eqref{eqn:A:3_4}. A functor $F:X\to Y$ is said to be \emph{full} if for every $x, x'\in X$ , the action of $F$ is surjective on $\Hom_{Y}(Fx; Fx')$. In other words, every morphism in $Y$ from $Fx$ to $Fx'$ is the image under $F$ of some morphism in $X$.

\begin{Assumption} \label{A:5}
	The functor $j_2$ is full.
\end{Assumption}

Assumption \ref{A:5} is trivially true for prevalence, as evident from Figure \ref{fig:A:3_4:4}.

\begin{theorem} [Nullity is an extension] \label{thm:null_ext}
	Let Assumptions \ref{A:1}, \ref{A:2}, \ref{A:3} and \ref{A:4} hold, and there is a nullity structure $\Null^{(\base)}$ on $\base$, and $\Null^{(\main)}$ is the extended notion of nullity according to the construction in Diagram \eqref{eqn:null_ext:7}. Then :
	\begin{enumerate} [(i)]
		\item There is an inclusion of nullity structures :
		\[ \Null^{(\base)}(x) \subseteq \overline{ \Null^{(\base)} } (x) \subseteq \Null^{(\main)}(x), \quad \forall x\in \base. \]
		\item Suppose Assumption \ref{A:5} also holds. Then $\overline{ \Null^{(\base)} }$ coincides with $\Null^{(\main)}(x)$.
		\item Further suppose that $\Null^{(\base)}$ is saturated. Then
		\[ \Null^{(\base)}(x) = \overline{ \Null^{(\base)} } (x) = \Null^{(\main)}(x), \quad \forall x\in \base. \]
	\end{enumerate}	 
\end{theorem}

\paragraph{Remark} Claim (i) of Theorem \ref{thm:3} reveals that if the pre-existing nullity structure on $\base$ is not saturated, then the categorical extension to $\main$ will not be an exact extension. 

\paragraph{Remark} Both Assumptions \ref{A:4} and \ref{A:5} are structural assumptions on how the functors $j_1$ and $j_2$ transform morphisms within $\inter$ and $\main$. These assumptions are not required for the construction in Diagram \eqref{eqn:nullity_construct}, but guarantee certain desirable properties.

The proof of Theorem \ref{thm:null_ext} depends on realizing certain functorial relations between comma categories. The commutative diagram on the left :
\begin{equation} \label{eqn:null_ext:1}
	\begin{tikzcd}
		\base \arrow[r, "="] \arrow[d, "="] & \base \arrow[d, "j_2"] & \base \arrow[l, "="'] \arrow[d, "j_1 j_2"] \\
		\base \arrow[r, "j_2"'] & \inter & \main \arrow[l, "\pi"]
	\end{tikzcd} \imply 
	\begin{tikzcd} \ArrowCat{\base} \arrow[d, "\incl_1", Shobuj] \\ \Comma{j_2}{\pi} \end{tikzcd}
\end{equation}
leads to an induced functor $\incl_1$ between the comma categories represented by the two horizontal rows. One similarly gets an induced functor $\incl_2$ as shown below :
\begin{equation} \label{eqn:null_ext:2}
	\begin{tikzcd}
		\base \arrow[r, "="] \arrow[d, "="] & \base \arrow[d, "j_1 j_2"] & \base \arrow[l, "="'] \arrow[d, "j_1 j_2"] \\
		\base \arrow[r, "j_1 j_2"'] & \main & \main \arrow[l, "="]
	\end{tikzcd} \imply 
	\begin{tikzcd} \ArrowCat{\base} \arrow[d, "\incl_2", Shobuj] \\ \Comma{j_1 j_2}{ \main } \end{tikzcd}
\end{equation}
The proof can be summarized in the following large commutation diagram that involves these functors.
\begin{equation} \label{eqn:null_ext:3}
	\begin{tikzcd} [scale cd = 1.3]
		\Comma{\gamma}{ \gamma } \arrow[rr] && \Comma{\gamma j_1 j_2}{ \gamma } \arrow[drrrr, bend left=10, "\Nullity_{j_1 j_2, \main}"] \\
		&& \Comma{j_1 j_2}{ \main } \arrow[dd, "\pi_*"' ] \arrow[Holud]{rrrr}{\Null_{\inter, \base}}[swap]{} \arrow[u] &&&& \Nullity \\
		\ArrowCat{\base} \arrow[uu] \arrow[dd, "="'] \arrow[urr, "\incl_2", Holud] \arrow[urrrrrr, bend right = 20, dotted, "\Null_{\base, \base}"', pos=0.8, Holud] \\
		&& \Comma{j_2}{\pi} \arrow[dd, "\Forget_2"'] \arrow[dashed, Akashi]{rrrr}{ \LEnv{ \pi_* }{ \Null_{\inter, \base} } }  &&&& \Nullity \\
		\ArrowCat{\base} \arrow[dd, "\Forget_2^{\base}"'] \arrow[urr, "\incl_1", Akashi] \arrow[urrrrrr, bend right = 20, dotted, "\Null_{\base, \base}"', pos=0.8, Akashi] \\
		&& \main \arrow[swap, pos=0.2, dashed, red]{rrrr}[name=n2]{\Null^{(\main)}} \arrow[dashed, red]{rrrr}{ \REnv{ \Forget_2 }{ \LEnv{ \pi_* }{ \Null_{\inter, \base} } } } &&&& \Nullity \\
		\base \arrow[urr, "j_1 j_2", red] \arrow[dotted, red]{rrrrrr}[pos=0.8, swap, name=n1]{ \overline{\Null^{(\base)}}} && &&&& \Nullity
		\arrow[shorten <=1pt, shorten >=1pt, Rightarrow, to path={(n1) to[out=90,in=-90] (n2)} ]{ }
	\end{tikzcd}
\end{equation}
The dashed arrows represent the Kan extensions displayed in \eqref{eqn:nullity_construct}, while the dotted arrows represent Kan extension displayed in \eqref{eqn:def:Bar_Null:1}. The main message of this diagram is the commutation between the corresponding pairs of Kan extensions. The lowermost commutation is precisely the extension claimed in Theorem \ref{thm:null_ext}. Thus the proof requires a verification of \eqref{eqn:null_ext:3}. See Section \ref{sec:proof:null_ext} for the details of the proof. 

We next look at more applications of Diagram \eqref{eqn:nullity_construct} and Theorems \ref{thm:1}, \ref{thm:null_ext} and \ref{thm:3}.

\section{Examples} \label{sec:examples}

In the following examples we use $\JetEucZ{k}$ to denote the category in which objects are Euclidean spaces, and the morphisms between two spaces $\real^m$ and $\real^n$ are $C^{k}$ maps. Such morphisms will be called $\JetEucZ{k}$-maps. Let $\Poly{k}$ denote the category in which objects are Euclidean spaces and morphisms are polynomials of degree $k$. The composition rule for morphisms is the usual composition rule for polynomials, modulo the symmetric functions of degree $k+1$.

\begin{example} [Jets 1] \label{ex:JetEuc:1}
	Figure \ref{fig:A:3_4:4} presents an arrangement of $\JetEucZ{1}$ and $\Poly{1}$ in the pattern of \eqref{eqn:A:3_4}. Here $j_1, j_2$ are inclusion functors, while $\pi$ is the functor which assigns to every $C^k$ map the collection of its first $k$ derivatives. is the identity on objects, and transforms any map $f:\real^m \to \real^n$ into the polynomial obtained from the order $k$ Taylor expansion of $f$. Thus the construction \eqref{eqn:nullity_construct} applies and results in a notion of nullity for the category $\JetEucZ{1}$ : A subset $S$ of $\real^n$ is null iff there is an affine map $A : \real^m \to \real^n$ such that for every $C^1$ map $T : \real^m \to \real^n$ whose 1-jet coincides with $A$, one has $\Leb_{\real^d} \paran{ T^{-1} (S) } = 0$.
\end{example}

The next example shows how the principle in Diagram \eqref{eqn:nullity_construct} can be used in an repeatedly to obtain nullity constructs for $\JetEucZ{k}$ inductively on $k$.

\begin{example} [Jets 2] \label{ex:JetEuc:k}
	Continuing the discussion in Example \ref{ex:JetEuc:1},  $\JetEucZ{k}$, $\JetEucZ{k+1}$ and $\Poly{k+1}$ can be arranged as shown in Figure \ref{fig:A:3_4:5} in the pattern of \eqref{eqn:A:3_4}. Thus the construction \eqref{eqn:nullity_construct} applies and results in a notion of nullity for the category $\JetEucZ{k+1}$ : A subset $S$ of $\real^d$ is null iff there is a degree-$k$ polynomial $A : \real^m \to \real^n$ such that for every $C^{k+1}$-map $T$ whose $k$-jet is $A$, the set $T^{-1} (S)$ is null in $\JetEucZ{k}$.
\end{example}

\begin{example} [Jets 3] \label{ex:JetEuc:k:quotient}
	 The nullity structure derived above in Example \ref{ex:JetEuc:k} is invariant under $C^k$-smooth maps. Let us call two morphisms $\phi, \phi':X\to Y$ in $\JetEucZ{k}$ equivalent if they are convertible by an invertible linear transformation.  We can construct a quotient category $\JetEucZ{k} / _{\sim}$ with the same set of objects, and in which the morphisms are equivalences classes. Then the above nullity construct is obviously a valid construct on $\JetEucZ{k} / _{\sim}$.
\end{example} 

All the categories $\EucCat_{\text{mono}}$, $\EucCat$, $\Poly{k}$ and $\JetEucZ{k}$ for various $k$ have the same set of objects. The difference is in the morphisms allowed. The following chain of inclusions is easily observed :
\[ \EucCat_{\text{mono}} \subset \EucCat \subset \Poly{k} \subset \JetEucZ{k} .\]
As a result the nullity constructs in Examples \ref{ex:preval}, \ref{ex:JetEuc:1} and \ref{ex:JetEuc:k} are successively richer, in the sense that they remain invariant under a richer set of morphisms. In the next examples, we use $\ManCat{k}_{pt}$ to denote the category of pointed manifolds. Thus objects are pairs $(x,M)$ where $x$ is a point on a manifold $M$ and a morphism $(x,M) \xrightarrow{\phi} (x', M')$ is a $C^k$-differentiable map $\phi$ such that $\phi(X) = x'$. 

\begin{example} [Pointed manifolds] \label{ex:man_pt:k}
	The categories $\JetEucZ{k}$, $\ManCat{k}_{pt}$ can be arranged as shown in Figure \ref{fig:A:3_4:6} in the pattern of \eqref{eqn:A:3_4}. Thus the construction \eqref{eqn:nullity_construct} applies and results in a notion of nullity for the category $\ManCat{k}_{pt}$ : A subset $S$ of a pointed $C^k$ manifold $(x,M)$ is null iff there is a $C^{k}$-map $T:\real^d \to M$ such that for every $C^{k}$-map $T:\real^d \to M$ mapping the origin into $x$, whose $k$-jet at $x$ is $A$, the set $T^{-1} (S)$ is null in $\JetEucZ{k}$.
\end{example}

Note that the Jet of a function at a point on a manifold does not have a unique representation. The representation depends on the choice of coordinates. However all of them are related by a linear change of variables. This is the reason for choosing the category $\JetEucZ{k} / _{\sim}$.
In the next example we shall see how any such notion of nullity for pointed manifolds can be extended to ordinary manifolds. This is where the principle outlined in \eqref{eqn:def:veeNull:1} will be employed.

\begin{example} [Manifolds] \label{ex:man:k}
	Continuing the discussion in Example \ref{ex:man_pt:k}, the categories $\ManCat{k}$, $\ManCat{k}_{pt}$ can be arranged as shown in Figure \ref{fig:A:3_4:7} in the pattern of \eqref{eqn:A:3_4}. Thus the construction \eqref{eqn:nullity_construct} applies and results in a notion of nullity for the category $\ManCat{k}$ : A subset $S$ of a $C^k$ manifold $M$ is null iff there is an $x\in M$ such that $S$ is null in $(x,M)$.
\end{example}

This completes the presentation of our examples.

There are many commonly used spaces lacking a notion of "full" or "null". Assumptions \ref{A:1}, \ref{A:2}, \ref{A:3} and \ref{A:4} provide a means of filling these gaps. This task requires a pre-existing notion of nullity, as captured in \eqref{eqn:null:4}, and a link as in \eqref{eqn:A:3_4} to the target category. As we have argued before, the notion of prevalence is a special case of the construction in \eqref{eqn:nullity_construct}. Although the former was done using classical Analysis, relying on a categorical approach offers many advantages. Firstly it provides a neat organization and separation of the various spaces and relations. Secondly the invariance of the resultant nullity structure with respect to morphisms in the category are automatically guaranteed by functoriality. Finally the categorical approach distills out the most essential features of the construction. For example in the case of Prevalence, properties such as reflexivity, or norm and inner product structure was non-essential.

\begin{question} \label{Q:1}
	Suppose $\calC$ is a sub-category of $\overline{\calC}$ with full object set, and the former has a nullity structure $\Null : \calC \to \Nullity$. Then how can one characterize the maximal sub-category of $\overline{\calC}$ that preserves the same nullity structure ?
\end{question}

In many practical scenarios a transformation presents itself as a morphism in a category $\overline{\calC}$ which is richer than a categorical structure $\calC$. Then the challenge is to prove that the morphism preserves the notion of prevalence.
Once again, the categorical approach presented seems to be the most promising path to answering these questions.
\section*{Statements and Declarations} The author has no financial or non-financial interests conflicting with the preparation or publication of this article.

This work was not supported by any source of funding.

\section{Appendix} \label{sec:proofs}

\subsection{Functors induced between comma categories}

\begin{lemma} \label{lem:oh9d}
	\cite[Prop 6]{Das2024slice} Consider the arrangement of categories $\calA, \calB, \calC, \calD, \calE$
	\begin{equation} \label{eqn:gpd30}
		\begin{tikzcd}
			\calA \arrow{d}{I} \arrow[Holud]{r}{F} & \calB \arrow{d}[swap]{J} & \calC \arrow[Holud]{l}[swap]{G} \arrow{d}{K} \\
			\calA' \arrow[Akashi]{r}[swap]{F'} & \calB' & \calC' \arrow[Akashi]{l}{G'}
		\end{tikzcd}
	\end{equation}
	Then there is an induced functor between comma categories
	\begin{equation} \label{eqn:odl9}
		\Psi : \Comma{F}{G} \to \Comma{F'}{G'}, 
	\end{equation}
	Moreover, the following commutation holds with the marginal functors :
	\begin{equation} \label{eqn:dgbw}
		\begin{tikzcd} [column sep = large]
			\calA \arrow[d, "I"'] & \Comma{F}{G} \arrow[l, "\Forget_1"'] \arrow[r, "\Forget_2"] \arrow[d, "\Psi"] & \calC \arrow[d, "K"] \\
			\calA' & \Comma{F'}{G'} \arrow[l, "\Forget_1"] \arrow[r, "\Forget_2"'] & \calC'  
		\end{tikzcd}
	\end{equation}
\end{lemma}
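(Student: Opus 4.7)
The plan is to construct $\Psi$ explicitly on objects and morphisms, verify functoriality, and then read off the commutations with the forgetful projections. The only algebraic inputs needed are the two strict commutations implicit in \eqref{eqn:gpd30}, namely $J\circ F = F'\circ I$ and $J\circ G = G'\circ K$.

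First I would define $\Psi$ on objects. An object of $\Comma{F}{G}$ is a triple $(a,c,\phi)$ with $a\in ob(\calA)$, $c\in ob(\calC)$, and $\phi : Fa \to Gc$ in $\calB$. Set
\[ \Psi(a,c,\phi) := \paran{ Ia,\, Kc,\, J\phi }. \]
Applying $J$ to $\phi$ yields $J\phi : JFa \to JGc$, and by the two commutations this arrow is a morphism $F'Ia \to G'Kc$ in $\calB'$, which is exactly the datum of an object of $\Comma{F'}{G'}$. On morphisms, a morphism $(f,g) : (a,c,\phi) \to (a',c',\phi')$ is a pair satisfying $\phi' \circ Ff = Gg \circ \phi$. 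I set $\Psi(f,g) := (If, Kg)$; to verify this is a well-defined morphism of $\Comma{F'}{G'}$, I would apply the functor $J$ to the defining square of $(f,g)$ and rewrite the result using $JFf = F'If$ and $JGg = G'Kg$, yielding the required identity $(J\phi') \circ (F'If) = (G'Kg) \circ (J\phi)$.

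Functoriality of $\Psi$ is then immediate: both identities and composition in $\Comma{F}{G}$ are computed componentwise on the underlying pairs, so they are preserved because $I$, $J$, $K$ are themselves functors. The commutations \eqref{eqn:dgbw} with the marginal functors are definitional on objects,
\[ \Forget_1 \Psi(a,c,\phi) = Ia = I \Forget_1 (a,c,\phi), \qquad \Forget_2 \Psi(a,c,\phi) = Kc = K \Forget_2(a,c,\phi), \]
and the analogous equalities on morphisms are built into the coordinatewise definition of $\Psi(f,g)$.

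No step presents a serious obstacle; the substantive content of the lemma is that the comma-category construction is jointly functorial in its two sides. The one point that warrants care is that the squares of \eqref{eqn:gpd30} are being assumed to commute strictly, not merely up to natural isomorphism. Strict equality is what allows $J\phi$ to land in the hom-set $\Hom_{\calB'}(F'Ia,\, G'Kc)$ on the nose, without the insertion of coherence arrows; a pseudo- or lax-commuting variant of \eqref{eqn:gpd30} would force $\Psi$ to absorb those coherence data and would correspondingly complicate both the definition and the verification of \eqref{eqn:dgbw}.
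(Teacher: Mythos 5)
Your construction is correct and is precisely the standard argument behind this result (the paper itself defers the proof to the cited reference, and the explicit componentwise definition $\Psi(a,c,\phi)=(Ia,Kc,J\phi)$, $\Psi(f,g)=(If,Kg)$ is exactly how such induced functors are built there and in the paper's own applications of the lemma). You also correctly made explicit the hypothesis left implicit in the statement, namely that the two squares of \eqref{eqn:gpd30} commute strictly ($J\circ F=F'\circ I$ and $J\circ G=G'\circ K$), which is what makes $J\phi$ a morphism $F'Ia\to G'Kc$ on the nose and makes the commutations \eqref{eqn:dgbw} hold definitionally.
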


Lemma \ref{lem:oh9d} has several applications. The first is an induced functor between arrow categories, created by a functor :
\begin{equation} \label{eqn:def:induce_arrow}
	\begin{tikzcd} \calX \arrow{d}[swap]{F} \\ \calY \end{tikzcd}
	\imply
	\begin{tikzcd} \ArrowCat{\calX} \arrow{d}{F_*} \\ \ArrowCat{\calY} \end{tikzcd}
\end{equation}
This is a result of a direct application of Lemma \ref{lem:oh9d} to the following special case of \eqref{eqn:gpd30} :
\[\begin{tikzcd}
	\calX \arrow{d}{F} \arrow{r}{=} & \calX \arrow{d}[swap]{F} & \calX \arrow{l}[swap]{=} \arrow{d}{F} \\
	\calY \arrow{r}[swap]{=} & \calY & \calY \arrow{l}{=}
\end{tikzcd}\]
Another important application is
\begin{equation} \label{eqn:sdo3h7}
	\begin{tikzcd} \calX \arrow[dr, "F"] & & \calY \arrow[dl, "G"'] \\ & \calZ \end{tikzcd} + 
	\begin{tikzcd} \calZ \arrow[d, "H"] \\ \calZ' \end{tikzcd} \imply 
	\begin{tikzcd}
		\calX \arrow{d}{=} \arrow{r}{F} & \calZ \arrow{d}[swap]{H} & \calY \arrow{l}[swap]{G} \arrow{d}{=} \\
		\calX \arrow{r}[swap]{HF} & \calZ & \calY \arrow{l}{HG}
	\end{tikzcd} \imply 
	\begin{tikzcd} \Comma{F}{G} \arrow[d, "H_*"] \\ \Comma{HF}{HG} \end{tikzcd}
\end{equation}
%

\subsection{Limits and co-limits} \label{sec:appendix:prepost_adj}

Suppose $J$ and $\calX$ are two categories. Then for any object $x\in ob(\calX)$ there is a constant functor that sends every object of $J$ into $x$, and every morphism into $\Id_x$. This functor is denoted as $x:J\to \calX$. Given a functor $F:J\to \calX$, a \emph{co-cone} for $F$ is a natural transformation $\eta : F \Rightarrow x$ from $F$ to a constant functor $x$. The object $x$ is called the \emph{tip} of the co-cone. A \emph{colimit} of $F$ is a universal co-cone, that means it is a co-cone $\bar{\eta} : F \Rightarrow \bar{x}$ such that for any other co-cone $\eta : F \Rightarrow x$, there is a unique $\phi : \bar{x} \to x$ such that $\eta = \phi \circ \bar{\eta}$. The tip $\bar{x}$ of the universal co-cone is also called the colimit of the functor $F$. Colimits are closely related to their dual notion : \emph{limits}.

Colimits and limits are a sweeping generalization of various fundamental constructs in mathematics, such as limits, convergence, min, max, union, intersection, inverse images, and gluing. The functor $J$ can interpreted as a pattern and $F$ as a diagram existing within $\calX$ in the shape of $J$. $J$ may be finite or infinite. For example a terminal object is thus the colimit of the identity functor $\Id_{\calX} : \calX \to \calX$. The colimit $\bar{x}$ provides a sense of completion or termination of the pattern. Colimits may not exist, but if they do, they are unique up to isomorphism.

The first technical result provides a sufficient condition under which pre-composition with a functor preserves limits and colimits.

\begin{lemma} \label{lem:preadj:1}
	\cite[Lem 4.2]{Das2023CatEntropy} Consider the following arrangement of functors and categories.
	\[\begin{tikzcd}
		X \arrow[r, "f"] & Y \arrow[r, "g"] & Z
	\end{tikzcd}\]
	\begin{enumerate} [(i)]
		\item If $f$ has a post-right adjoint $f^*$, and the colimits $\colim g$, $\colim gf$ and $\colim gff^*$ exist,  then $\colim gff^* = \colim fg = \colim g$.
		\item If $f$ has a pre-right adjoint $f^*$, and the limits $\lim g$, $\lim gf$ and $\lim gff^*$ exist,  then $\lim gff^* = \lim fg = \lim g$.
	\end{enumerate}
\end{lemma}

Before the proof, let us set the convention that a constant functor from a category $C$ with constant value $d$ in the codomain $D$, will be denoted as $\Delta_{C,d}$.

\begin{proof} We only prove the second claim, as the first follows from duality. By assumption there is a natural transformation
	\[ \eta : ff^* \Rightarrow \Id_{Y}. \]
	Let the limits of the functors $g$, $gf$, $gff^*$ be $c_{g}$, $c_{gf}$ and $c_{gff^*}$ respectively. Thus they create the limiting cones
	\[ \eta^{(g)} : \Delta_{Y, c_{g}} \Rightarrow g, \; \eta^{(gf)} : \Delta_{X, c_{gf}} \Rightarrow gf, \; \eta^{(gff^*)} : \Delta_{Y, c_{gff^*}} \Rightarrow gff^*, \; \]
	Note that $\Delta_{Y, c_{g}} \circ f = \Delta_{X, c_{g}}$ and $\Delta_{X, c_{gf}} \circ f^* = \Delta_{Y, c_{gf}}$. Then we have
	\begin{equation} \label{eqn:preadj:1:1}
		\begin{tikzcd}
			\Delta_{X, c_{g}} \arrow[Rightarrow]{dr}[swap]{ !\phi } \arrow[Rightarrow]{rr}{ \eta^{(g)} \Id_{f} } && gf \\
			& \Delta_{X, c_{gf}} \arrow[Rightarrow]{ur}[swap]{ \eta^{(gf)} }
		\end{tikzcd}, \quad 
		\begin{tikzcd}
			\Delta_{Y, c_{gf}} \arrow[Rightarrow]{dr}[swap]{ !\psi } \arrow[Rightarrow]{rr}{ \eta^{(gf)} \Id_{f^*} } && gff^* \\
			& \Delta_{Y, c_{gff^*}} \arrow[Rightarrow]{ur}[swap]{ \eta^{(gff^*)} }
		\end{tikzcd}
	\end{equation}
	The left and right diagrams show a commutation between functors and natural transformations, in the functor categories $\Functor{X}{Z}$ and $\Functor{Y}{Z}$ respectively. We also have :
	\begin{equation} \label{eqn:preadj:1:2}
		\begin{tikzcd} [column sep = large]
			\Delta_{Y, c_{g}} \arrow[Rightarrow]{d}[swap]{ \eta^{(g)} } & \Delta_{Y, c_{gff^*}} \arrow[Rightarrow]{l}[swap]{!\gamma} \arrow[Rightarrow]{d}{ \eta^{(gff^*)} } \\
			g & gff^* \arrow[Rightarrow]{l}{ \Id_{g} \eta }
		\end{tikzcd}
	\end{equation}
	By the universality of $\eta^{(g)}$ we also have the following commutation :
	\begin{equation} \label{eqn:preadj:1:3}
		\begin{tikzcd} [column sep = large]
			\Delta_{Y, c_{g}} \arrow[Rightarrow]{d}[swap]{ \eta^{(g)} } \arrow[Rightarrow]{dr}{ \eta^{(g)} ff^* }  \\
			g & gff^* \arrow[Rightarrow]{l}{ \Id_{g} \eta }
		\end{tikzcd}
	\end{equation}
	Next, composing the left commuting diagram of \eqref{eqn:preadj:1:1} with $f^*$ gives
	\begin{equation} \label{eqn:preadj:1:4}
		\begin{tikzcd}
			\Delta_{Y, c_{g}} \arrow[Rightarrow]{dr}[swap]{ !\phi f^*} \arrow[Rightarrow]{rr}{ \eta^{(g)} f f^* } && gff^* \\
			& \Delta_{Y, c_{gf}} \arrow[Rightarrow]{ur}[swap]{ \eta^{(gf)} f^* }
		\end{tikzcd}
	\end{equation}
	The commuting diagrams \eqref{eqn:preadj:1:1}, \eqref{eqn:preadj:1:2}, \eqref{eqn:preadj:1:3} and \eqref{eqn:preadj:1:4} combine to give :
	\[\begin{tikzcd} [column sep = large]
		\Delta_{Y, c_{g}} \arrow[Rightarrow]{ddr}[swap]{ \eta^{(g)}  ff^* } \arrow[Rightarrow, bend left=10, Akashi]{drr}{!\phi \Id_{f^*}} \arrow[Rightarrow]{dd}[swap]{ \eta^{(g)} } \\
		{} & \Delta_{Y, c_{gff^*}} \arrow[Rightarrow, Akashi]{ul}[swap]{!\gamma} \arrow[Rightarrow, Akashi]{d}{ \eta^{(gff^*)} } & \Delta_{Y, c_{gf}} \arrow[Rightarrow]{l}[swap]{!\psi} \arrow[Rightarrow, bend left=20, Akashi]{dl}{ \eta^{(gf)} \Id_{f^*} } \\
		g & gff^* \arrow[Rightarrow]{l}{ \Id_{g} \eta }
	\end{tikzcd}\]
	The diagram reveals that the limiting cone $\eta^{(gff^*)}$ factorizes as
	\[\eta^{(gff^*)} = \paran{ \eta^{(gf)} f^* } \paran{\phi f^*} \gamma. \]
	By the minimality of the limiting cone, this implies that both $\phi$ and $\gamma$ are isomorphisms. This implies that all the three limits $c_{g}$, $c_{gf}$ and $c_{gff^*}$ are the same.
\end{proof}

The next result shows how a left and right pre/post adjoint pair carries over to comma categories.

\begin{lemma} \label{lem:preadj:2}
	Consider the following arrangement of functors and categories.
	\[\begin{tikzcd} A \arrow[r, "a"] & B \arrow[r, "f"] & E \end{tikzcd}\]
	By Lemma \ref{lem:oh9d} there is an induced functor
	\[\begin{tikzcd} \Comma{fa}{E} \arrow[r, "a_*"] & \Comma{f}{E}  \end{tikzcd}\]
	Then $a^*$ above has a pre / post right adjoint if $a$ has a pre / post-right adjoint.
\end{lemma}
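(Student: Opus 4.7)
The plan is to promote the pre- or post-right adjoint $a^* : B \to A$ of $a$ to an explicit functor $(a_*)^* : \Comma{f}{E} \to \Comma{fa}{E}$, and to extract the pre- or post-right adjoint property of $a_*$ from the natural transformation witnessing the hypothesis on $a^*$.

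In the pre case, let $\eta : a a^* \Rightarrow \Id_B$ be the given data. On an object $(b, e, \psi : fb \to e)$ of $\Comma{f}{E}$ set
\[ (a_*)^* (b, e, \psi) := \paran{ a^* b, \; e, \; \psi \circ f \eta_b } , \]
so that $\psi \circ f\eta_b : fa(a^*b) \to e$ makes the triple an object of $\Comma{fa}{E}$, and on a morphism $(\alpha, \beta)$ of $\Comma{f}{E}$ set $(a_*)^*(\alpha, \beta) := (a^* \alpha, \beta)$. The commuting-square condition defining $(\alpha, \beta)$ transports across $\eta$ by naturality, so the image pair is a legitimate morphism of $\Comma{fa}{E}$ and $(a_*)^*$ is functorial. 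The composite $a_* (a_*)^*$ then sends $(b, e, \psi)$ to $(a a^* b, e, \psi \circ f\eta_b)$, and the pair $(\eta_b, \Id_e)$ is a morphism back to $(b, e, \psi)$ in $\Comma{f}{E}$; its naturality in $(b, e, \psi)$ reduces to naturality of $\eta$ viewed as an endo-transformation of $B$. This is precisely the datum making $(a_*)^*$ a pre-right adjoint of $a_*$.

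The post case, with data $\epsilon : \Id_B \Rightarrow a a^*$, is dual in spirit but differs in one essential respect: the morphism $f \epsilon_b : fb \to faa^*b$ points outward rather than inward, so post-composition with $\psi$ does not yield a morphism into $e$. One instead lets the target grow: define $(a_*)^*(b, e, \psi) := (a^* b, e', \phi)$, where $(e', \phi, \mu)$ is the pushout in $E$ of $f\epsilon_b$ along $\psi$, giving $\phi : faa^* b \to e'$ and $\mu : e \to e'$ with $\phi \circ f\epsilon_b = \mu \circ \psi$. Then $(\epsilon_b, \mu)$ is a morphism $(b, e, \psi) \to a_*(a_*)^*(b,e,\psi)$ in $\Comma{f}{E}$, and a routine pushout-plus-naturality argument assembles these into a natural transformation $\Id \Rightarrow a_* (a_*)^*$, exhibiting $(a_*)^*$ as a post-right adjoint of $a_*$.

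The main obstacle is the post case. Unlike the entirely formal pre case, which is pure diagram chasing propelled by naturality of $\eta$, the post construction genuinely requires $E$ to admit pushouts along morphisms of the form $f\epsilon_b$: without such colimits there is no natural way to repair the mismatch caused by the unit pointing the wrong way. In all applications of Lemma \ref{lem:preadj:2} in the main text, the target category $E$ is $\Nullity$, $\SetCat$, $\main$, or $\inter$, which are rich enough to carry these colimits, so the statement should be read as implicitly carrying this mild hypothesis on $E$.
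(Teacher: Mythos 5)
Your pre-case is correct and is essentially the paper's own argument made explicit: the paper's two-line proof whiskers the transformation relating $aa^*$ and $\Id_B$ with $f$ and feeds it into the induced-functor machinery of Lemma \ref{lem:oh9d}, which is exactly your functor $(b,e,\psi)\mapsto(a^*b,\,e,\,\psi\circ f\eta_b)$ together with the counit components $(\eta_b,\Id_e)$. Where you genuinely depart from the paper is the post case, and your instinct there is right --- in fact more right than you claim. The paper disposes of both cases with the single remark that ``there is a natural transformation between $aa^*$ and $\Id_B$'', but with $\epsilon:\Id_B\Rightarrow aa^*$ the whiskered cell $f\Rightarrow faa^*$ points the wrong way to induce a functor $\Comma{f}{E}\to\Comma{fa}{E}$ by the same recipe, and the post half of the lemma is actually \emph{false} as stated. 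A small counterexample: let $A=\star$, let $B$ have objects $b,c$ and one non-identity arrow $u:b\to c$, and let $a$ pick out $c$ (then $\epsilon_b=u$, $\epsilon_c=\Id_c$ makes $a$ post-r.a.e.); let $E$ have objects $P,Q,R$ with only the arrows $v:P\to Q$ and $\psi:P\to R$, and set $f(b)=P$, $f(c)=Q$, $f(u)=v$. Then $\Comma{fa}{E}$ has the single object $(\star,Q,\Id_Q)$, so $a_*S$ is constant at $(c,Q,\Id_Q)$ for any $S$, while the object $(b,R,\psi)$ of $\Comma{f}{E}$ admits no morphism to $(c,Q,\Id_Q)$ (that would require an arrow $R\to Q$); hence no transformation $\Id\Rightarrow a_*S$ exists. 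So your added hypothesis is a necessary repair, not a convenience; your pushout construction is correct as written, and a terminal object in $E$ would also suffice (giving a degenerate post-right adjoint, since the paper's notion imposes no triangle identities).

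Two caveats on your closing paragraph. First, the hypothesis should be stated, not left implicit: in the places where the post case is actually used (Lemmas \ref{lem:Kan_id:2} and \ref{lem:Kan_id:3}, hence Lemma \ref{lem:Kan_id:4} and Theorem \ref{thm:null_ext}), the role of $E$ is played by abstract categories such as $\main$ or $\Comma{j_2}{\pi}$, and Assumptions \ref{A:1}--\ref{A:4} supply no pushouts there, so ``rich enough'' is itself an unproved claim. Second, those applications invoke the lemma for slices $\Comma{fa}{e}$ over a fixed object $e$, where the codomain object cannot be enlarged and the pushout repair does not transfer; there a post-right adjoint forces every $\psi:fb\to e$ to factor through some $fa(x)$, a genuinely different condition (my counterexample with $e=R$ kills that version too, since $\Comma{fa}{R}$ is empty while $\Comma{f}{R}$ is not). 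So your proof is sound for the statement as written, under your flagged hypothesis, but the gap you spotted propagates further into the paper than your last paragraph suggests.
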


The pre- or post right adjoint $a^*$ leads to the following commuting diagrams that connect comma categories.
\[\begin{tikzcd}
	B \arrow[r, "fa"] \arrow[d, "a^*"] \arrow[dd, "aa^8", bend right=49] & E \arrow[d, "="] & E \arrow[l, "="'] \arrow[d, "="'] \\
	A \arrow[r, "a"] \arrow[d, "a"] & E \arrow[d, "="] & E \arrow[l, "="'] \arrow[d, "="'] \\
	B \arrow[r, "f"] & E & E \arrow[l, "="'] 
\end{tikzcd}\]
The claim of the lemma now follows from the observation that there is a natural transformation between $aa^*$ and $\Id_B$. \qed

\subsection{Kan extensions} \label{sec:appendix:Kan}

Kan extensions are functors, and it is often possible to determine their action on objects, as shown below :

\begin{lemma} \label{lem:Kan_pointwise}
	Consider the arrangement of \eqref{eqn:dpp3k}. Then 
	\begin{equation} \label{eqn:Kan_pointwise}
		\begin{split}
			\mbox{If } E \mbox{ is cocomplete, } \mbox{ then }& \REnv{K}{F}(d) = \colim \SetDef{Fx}{ Kx \to d } \\ 
			\mbox{If } E \mbox{ is complete, } \mbox{ then } & \LEnv{K}{F}(d) = \lim \SetDef{Fx}{ d \to Kx } \\ 
		\end{split}
	\end{equation}
\end{lemma}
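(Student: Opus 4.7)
The plan is to prove the two statements dually, so I will describe the left-Kan/right-envelope case in detail and indicate the dual. Fix $d \in D$ and consider the slice-type comma category $(K \downarrow d)$ whose objects are pairs $(x, \phi)$ with $\phi : Kx \to d$, and whose morphisms $(x,\phi) \to (x',\phi')$ are $u : x \to x'$ in $X$ with $\phi' \circ Ku = \phi$. There is a canonical forgetful functor $P_d : (K \downarrow d) \to X$, $(x,\phi) \mapsto x$. First I would define
\[
\psi(d) := \colim \bigl( F \circ P_d \bigr),
\]
which exists by cocompleteness of $E$, matching the set-theoretic formula $\colim \SetDef{Fx}{Kx \to d}$ in the statement. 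For each object $(x,\phi)$ of $(K\downarrow d)$, write $\iota^d_{(x,\phi)} : Fx \to \psi(d)$ for the corresponding colimit leg.

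Second, I would upgrade $\psi$ to a functor $D \to E$. Given $h : d \to d'$ in $D$, post-composition with $h$ gives a functor $h_\# : (K\downarrow d) \to (K\downarrow d')$ sending $(x,\phi) \mapsto (x, h\phi)$, and clearly $P_{d'} \circ h_\# = P_d$. Hence the cocone $\{\iota^{d'}_{(x, h\phi)}\}_{(x,\phi)}$ on $F\circ P_d$ with tip $\psi(d')$ factors uniquely through $\psi(d)$, defining $\psi(h) : \psi(d) \to \psi(d')$. Functoriality $\psi(h'\circ h) = \psi(h')\circ \psi(h)$ and $\psi(\Id_d) = \Id_{\psi(d)}$ follow from uniqueness of factorizations out of colimits.

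Third, I would exhibit the unit $\eta : F \Rightarrow \psi \circ K$. For each $x \in X$, the pair $(x, \Id_{Kx})$ is an object of $(K\downarrow Kx)$, so set $\eta_x := \iota^{Kx}_{(x, \Id_{Kx})} : Fx \to \psi(Kx)$. Naturality in $x$ reduces to the observation that for $u : x \to x'$, both composites $\psi(Ku)\circ \eta_x$ and $\eta_{x'} \circ Fu$ are two colimit legs into $\psi(Kx')$ corresponding to the same object $(x, Ku)$ of $(K\downarrow Kx')$, so they agree.

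The main obstacle, and the step where care is needed, is verifying the universal property. Given any $H : D \to E$ with a natural transformation $\gamma : F \Rightarrow H\circ K$, I need a unique $\tilde\gamma : \psi \Rightarrow H$ with $\gamma = (\tilde\gamma \star \Id_K) \circ \eta$. For each $d$, the assignment $(x,\phi) \mapsto H(\phi) \circ \gamma_x : Fx \to Hd$ defines a cocone on $F\circ P_d$ with tip $Hd$: compatibility along a morphism $u : (x,\phi)\to(x',\phi')$ uses naturality of $\gamma$ and the identity $\phi = \phi' \circ Ku$. By the universal property of the colimit $\psi(d)$, this cocone induces a unique map $\tilde\gamma_d : \psi(d) \to Hd$. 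Naturality of $\tilde\gamma$ in $d$ and the unit-equation $\gamma_x = \tilde\gamma_{Kx} \circ \eta_x$ both reduce to equations between colimit legs, and uniqueness of $\tilde\gamma$ is forced by the requirement that $\tilde\gamma_d \circ \iota^d_{(x,\phi)} = H(\phi) \circ \gamma_x$ for every object $(x,\phi)$, since the legs jointly generate $\psi(d)$. The right Kan extension case is proved by the dual argument, replacing $(K\downarrow d)$ by $(d \downarrow K)$, colimits by limits, and cocones by cones, using completeness of $E$. \qed
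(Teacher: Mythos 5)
Your proof is correct, and it is the classical pointwise construction of Kan extensions: form the comma category $(K\downarrow d)$, take $\psi(d)=\colim\,(F\circ P_d)$, make this functorial in $d$ via post-composition, build the unit from the legs at $(x,\Id_{Kx})$, and verify the universal property by producing the cocone $(x,\phi)\mapsto H(\phi)\circ\gamma_x$. The paper, by contrast, offers no proof of this lemma at all: it states the pointwise formula as standard background, deferring to the Kan-extension literature it cites (Perrone, Street, Riehl), so your contribution is to supply the textbook argument the paper omits. Two small points are worth tightening. First, "exists by cocompleteness of $E$" tacitly assumes the comma categories $(K\downarrow d)$ are small (e.g.\ $X$ small); the paper makes the same tacit assumption, so this is a caveat rather than a gap. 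Second, your uniqueness step is slightly compressed: the identity $\tilde\gamma_d\circ\iota^d_{(x,\phi)}=H(\phi)\circ\gamma_x$ is not part of the data of a candidate $\tilde\gamma$ but must be derived from it, using naturality of $\tilde\gamma$ applied to $\phi:Kx\to d$ together with $\psi(\phi)\circ\eta_x=\iota^d_{(x,\phi)}$ and the unit equation $\gamma_x=\tilde\gamma_{Kx}\circ\eta_x$; once that one line is added, the joint epimorphicity of the colimit legs does force uniqueness, and the dual argument for $\LEnv{K}{F}$ goes through verbatim.
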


The colimit and limit in \eqref{eqn:Kan_pointwise} are along slices of the object $d$ along $K$. This construction is known as the \emph{pointwise} definition of Kan extensions. Note that each slice, left or right, can be interpreted as a constraint on the objects of $K$.
If $E$ is a co-complete category, the left Kan extension always exists. Similarly if $E$ is a complete category, the right Kan extension always exists. 

We next present a series of lemmas that establish conditions under which Kan extensions are compatible with various pre and post compositions with functors. We begin by introducing a relaxed version of inverses.

\paragraph{Pre- and post- adjoints} Let $T : A \to B$ be a functor. Then $T^* : B \to A$ is said to be a \textit{pre-right adjoint} to $T$ if $T T^* \Rightarrow Id_B$ . Such a functor $T$ is said to be \textit{pre-right adjoint enabled} or in brief pre-r.a.e.. Similarly, $T^* : B \to A$ is said to be a \textit{post-right adjoint} to $T$ if $Id_B \Rightarrow T T^*$ . In that case $T$ will be called \textit{post-right adjoint} enabled or in brief post-r.a.e.. Note that a left (right) inverse is both a pre- and post adjoint. Similarly, a left (right) adjoint is both a pre- and post adjoint.

\begin{lemma} \label{lem:Kan_id:2}
	Consider the following arrangement of functors $a, b, f$ as shown below.
	\[\begin{tikzcd}
		A \arrow[r, "a"] & B \arrow[rr, "b"] \arrow[d, "f"'] && C \\
		& E \arrow[rr, "\LEnv{f}{b}"'] && C
	\end{tikzcd}\]
	\begin{enumerate} [(i)]
		\item If $C$ is co-complete and $a$ has a post-right adjoint, then $\REnv{f}{b} = \REnv{fa}{ba}$.
		\item If $C$ is complete and $a$ has a pre-right adjoint, then $\LEnv{f}{b} = \LEnv{fa}{ba}$.
	\end{enumerate}
\end{lemma}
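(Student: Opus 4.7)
The plan is to prove both parts by invoking the pointwise formula for Kan extensions (Lemma \ref{lem:Kan_pointwise}), and then transferring the hypothesis on $a$ to the induced functor between slice categories via Lemma \ref{lem:preadj:2}, after which Lemma \ref{lem:preadj:1} identifies the two (co)limits.

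Fix an object $e \in E$, and consider part (i). By Lemma \ref{lem:Kan_pointwise}, cocompleteness of $C$ gives the pointwise expressions
\begin{equation*}
\REnv{f}{b}(e) \;=\; \colim \SetDef{ b(y) }{ y \in B,\, fy \to e }, \quad \REnv{fa}{ba}(e) \;=\; \colim \SetDef{ b(ax) }{ x \in A,\, f(ax) \to e },
\end{equation*}
where the colimits are indexed by the slice categories $f/e$ and $(fa)/e$, with diagrams $b \circ \Forget_1$ and $b \circ a \circ \Forget_1$ respectively. Applying Lemma \ref{lem:oh9d} to the composition $A \xrightarrow{a} B \xrightarrow{f} E$ (specialized to the slice over $e$) produces an induced functor $a_e : (fa)/e \to f/e$ sending $(x, \phi) \mapsto (ax, \phi)$ and satisfying $\Forget_1 \circ a_e = a \circ \Forget_1$. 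Consequently, the diagram for the right-hand colimit factors as $(b \circ \Forget_1) \circ a_e$.

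The next step is to observe that $a_e$ inherits a post-right adjoint from $a$. This is the content of Lemma \ref{lem:preadj:2}, specialized to the fiber over $e$: the post-right adjoint of $a$ pulls back along the forgetful projection $\Comma{fa}{E} \to E$ to a post-right adjoint of $a_e$. Once $a_e$ is known to be post-right-adjoint enabled, part (i) of Lemma \ref{lem:preadj:1} gives
\begin{equation*}
\colim\paran{ (b \circ \Forget_1) \circ a_e } \;=\; \colim\paran{ b \circ \Forget_1 },
\end{equation*}
which is precisely $\REnv{fa}{ba}(e) = \REnv{f}{b}(e)$ as objects of $C$. Naturality in $e$, and hence equality of the two sides as functors $E \to C$, follows from the universal property defining each Kan extension.

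Part (ii) is the formal dual: Lemma \ref{lem:Kan_pointwise} expresses $\LEnv{f}{b}$ and $\LEnv{fa}{ba}$ as limits over the opposite slices $e/f$ and $e/(fa)$; the pre-right adjoint of $a$ supplies, via the dual half of Lemma \ref{lem:preadj:2}, a pre-right adjoint for the induced functor $a_e : e/(fa) \to e/f$; part (ii) of Lemma \ref{lem:preadj:1} then identifies the two limits. The main obstacle I anticipate is verifying that the pre-/post-right adjoint provided by Lemma \ref{lem:preadj:2} at the level of the full comma categories $\Comma{fa}{E}$ and $\Comma{f}{E}$ restricts cleanly to each fiber $(fa)/e$ and $f/e$; this amounts to checking that the adjoint commutes with the forgetful projections to $E$, which it does by construction. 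Once this compatibility is pinned down, the proof is a routine assembly of the three cited lemmas.
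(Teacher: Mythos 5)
Your proposal matches the paper's own proof essentially step for step: fix $e\in E$, apply the pointwise formula of Lemma \ref{lem:Kan_pointwise}, use Lemma \ref{lem:oh9d} to obtain the induced functor between slice categories commuting with the forgetful functors, transfer the post-/pre-right adjoint via Lemma \ref{lem:preadj:2}, and identify the two (co)limits with Lemma \ref{lem:preadj:1}, with (ii) dual. Your explicit check that the adjoint supplied by Lemma \ref{lem:preadj:2} restricts to the fiber over $e$ is a point the paper passes over silently, so no gap remains.
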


\begin{proof} We only prove Claim (i) as Claim (ii) is analogous. Fix an object $e\in E$. We have the following commutation diagram
	\[\begin{tikzcd} [column sep = large]
		\Comma{fa}{e} \arrow[d, "\Forget_1^{(fa)}"'] \arrow[r, "a_*"] & \Comma{f}{e} \arrow[d, "\Forget_1^{(f)}"'] \arrow[dashed, bend left = 20, "", dr] \\
		A \arrow[r, "a"] & B \arrow[r, "b"] & C
	\end{tikzcd}\]
	The commutation square follows from Lemma \ref{lem:oh9d}, while the commutation triangle follows from construction. Since $C$ is co-complete, Lemma \ref{lem:Kan_pointwise}  applies and we have :
	\[ \REnv{fa}{ba}(e) = \colim ba \Forget_2^{(fa)} = \colim b \Forget_2^{(f)} a_*  = \colim b \Forget_2 = \REnv{f}{b}(e). \]
	The second equality follows from the commutative diagram, the third follows from Lemma \ref{lem:preadj:2} , and fourth and final equality follows again from Lemma \ref{lem:Kan_pointwise}. This completes the proof of Lemma \ref{lem:Kan_id:2}.
\end{proof}

\begin{lemma} \label{lem:Kan_id:3}
	Consider the following arrangement of functors $c, d, e$ as shown below.
	\[\begin{tikzcd}
		A \arrow[rr, "c"] \arrow[d, "d"'] && C \\
		D \arrow[d, "e"'] \arrow[rr, "\LEnv{d}{c}"] && C \\
		E \arrow[urr, "\LEnv{ed}{c}"', dotted]
	\end{tikzcd}, \quad 
	\begin{tikzcd}
		A \arrow[rr, "c"] \arrow[d, "d"'] && C \\
		D \arrow[d, "e"'] \arrow[rr, "\REnv{d}{c}"] && C \\
		E \arrow[urr, "\REnv{ed}{c}"', dotted]
	\end{tikzcd}\]
	\begin{enumerate} [(i)]
		\item If $C$ is complete and $e$ has a pre right adjoint, then the commutation on the left holds, i.e., $\LEnv{ed}{c} \circ e = \LEnv{d}{c}$.
		\item If $C$ is co-complete and $e$ has a post right adjoint, then the commutation on the right holds, i.e., $\REnv{ed}{c} \circ e = \REnv{d}{c}$.
	\end{enumerate}
\end{lemma}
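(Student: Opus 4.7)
The proof closely mirrors that of Lemma \ref{lem:Kan_id:2}, with the roles of ``pre'' and ``post'' swapped and colimits replaced by limits; I describe only part (i), since part (ii) is strictly dual. Fix an object $d_0\in D$. The plan is to compare the pointwise presentations of $\LEnv{d}{c}(d_0)$ and $\LEnv{ed}{c}(e(d_0))$ furnished by Lemma \ref{lem:Kan_pointwise}: they are, respectively, the limits of $c\circ \Forget_2$ taken over the slice comma categories $\Comma{d_0}{d}$ and $\Comma{e(d_0)}{ed}$. Applying Lemma \ref{lem:oh9d} to the square
\begin{equation*}
\begin{tikzcd}
\star \arrow[d, "="'] \arrow[r, "d_0"] & D \arrow[d, "e"] & A \arrow[l, "d"'] \arrow[d, "="] \\
\star \arrow[r, "e(d_0)"'] & E & A \arrow[l, "ed"']
\end{tikzcd}
\end{equation*}
produces an induced functor $e_* : \Comma{d_0}{d} \to \Comma{e(d_0)}{ed}$ that commutes strictly with the second forgetful functors down to $A$, and hence satisfies $c\circ \Forget_2^{e(d_0)}\circ e_* = c\circ \Forget_2^{d_0}$.

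Next, I invoke the argument behind Lemma \ref{lem:preadj:2} to transfer the pre-right adjoint from $e$ to the induced $e_*$. Since $C$ is complete, both limits exist, so Lemma \ref{lem:preadj:1}(ii) applies and gives
\begin{equation*}
\LEnv{d}{c}(d_0) \;=\; \lim\bigl(c\circ \Forget_2^{e(d_0)}\circ e_*\bigr) \;=\; \lim\bigl(c\circ \Forget_2^{e(d_0)}\bigr) \;=\; \LEnv{ed}{c}\bigl(e(d_0)\bigr).
\end{equation*}
Every step in this chain is natural in $d_0$, so the pointwise identification upgrades to the functor identity $\LEnv{ed}{c}\circ e = \LEnv{d}{c}$. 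Part (ii) follows by dualising throughout: colimits in place of limits, $\REnv{-}{-}$ in place of $\LEnv{-}{-}$, co-completeness of $C$, a post-right adjoint for $e$, and slices of the form $\Comma{d}{d_0}$ indexing the colimit; Lemma \ref{lem:preadj:1}(i) then plays the role of its part (ii).

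The main obstacle I expect is the lifting step: Lemma \ref{lem:preadj:2} is stated for a slightly different shape of induced comma functor, so one must re-run the short adjointness argument in the present setting. Specifically, letting $e^*$ denote the pre-right adjoint of $e$ with counit $\varepsilon : e\circ e^*\Rightarrow \Id_E$, one applies the construction of Lemma \ref{lem:oh9d} to the appropriate three-column diagram to obtain a candidate $(e^*)_*$, and then whiskers $\varepsilon$ through the comma construction to produce a natural transformation $e_*\circ (e^*)_*\Rightarrow \Id_{\Comma{e(d_0)}{ed}}$ exhibiting $(e^*)_*$ as a pre-right adjoint of $e_*$. Once this routine bookkeeping is done, the remainder of the proof is a mechanical instantiation of Lemmas \ref{lem:Kan_pointwise} and \ref{lem:preadj:1}, exactly parallel to the proof of Lemma \ref{lem:Kan_id:2}.
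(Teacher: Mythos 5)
Your proposal follows essentially the same route as the paper's proof: fix an object, invoke the pointwise formula of Lemma \ref{lem:Kan_pointwise}, obtain the induced slice-level functor $e_*$ from Lemma \ref{lem:oh9d} together with its commutation with the forgetful functor to $A$, transfer the pre/post right adjoint from $e$ to $e_*$ in the manner of Lemma \ref{lem:preadj:2}, and finish with Lemma \ref{lem:preadj:1} -- the only difference being that you write out part (i) and dualize, whereas the paper writes out part (ii) and dualizes. The shape mismatch you flag in Lemma \ref{lem:preadj:2} is genuine, but the paper's own argument makes exactly the same appeal at that step, so your attempt matches the published proof essentially verbatim.
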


\begin{proof} We only prove Claim (ii), as Claim (i) is analogous. Since $C$ is co-complete, it is sufficient to show that the two functors are equal pointwise. So fixing an arbitrary $x\in D$ we have to show that $\REnv{ed}{c}(e(x)) = \REnv{d}{c}(x)$. We have the following commutation diagram
	\[\begin{tikzcd}
		A \arrow[d, "="'] \arrow[r, "d"] & D \arrow[d, "e"] & \star \arrow[l, "x"'] \arrow[d, "="] \\
		A \arrow[r, "ed"] & E & \star \arrow[l, "e(x)"']
	\end{tikzcd} \imply
	\begin{tikzcd} \Comma{de}{e(x)} \arrow[d, "e_*"] \\ \Comma{d}{x} \end{tikzcd}\]
	The projection of this induced functor along the first components leads to the following commuting diagram :
	\[\begin{tikzcd} [column sep = large]
		\Comma{ed}{e(x)} \arrow[r, "e_*"] \arrow[dr, bend right=20, "\Forget_1^{(de)}"', pos=0.3] & \Comma{d}{x} \arrow[d, "\Forget_1^{(d)}"'] \arrow[dr, bend left=20, "", dashed] \\
		& A \arrow[r, "c"] & C
	\end{tikzcd}\]
	Then we have
	\[ \REnv{ed}{c}(e(x)) = \colim c \Forget_1^{(de)} = \colim c \Forget_1^{(d)} e_* = \colim c \Forget_1 = \REnv{d}{c}(x). \]
	The first and fourth equalities follow from Lemma \ref{lem:Kan_pointwise}, the second from the commutation. Since $e$ has a post-right adjoint, so does $e_*$ by Lemmas \ref{lem:preadj:2}. The third equality now follows from \ref{lem:preadj:1}~(i). This completes the proof of Lemma \ref{lem:Kan_id:3}.
\end{proof}

\begin{lemma} \label{lem:Kan_id:4}
	Consider the following arrangement of functors $a, b, d, f, e$ as shown below. 
	\[\begin{tikzcd}
		A \arrow[r, "a"] \arrow[d, "d"'] & B \arrow[rr, "b"] \arrow[d, "f"'] && C \\
		D \arrow[rrr, bend right=20, "\LEnv{d}{ba}"', dotted] \arrow[r, "e"] & E \arrow[rr, "\LEnv{f}{b}"] && C
	\end{tikzcd}, \quad 
	\begin{tikzcd}
		A \arrow[r, "a"] \arrow[d, "d"'] & B \arrow[rr, "b"] \arrow[d, "f"'] && C \\
		D \arrow[rrr, bend right=20, "\REnv{d}{ba}"', dotted] \arrow[r, "e"] & E \arrow[rr, "\REnv{f}{b}"] && C
	\end{tikzcd}\]
	\begin{enumerate} [(i)]
		\item If $C$ is complete and $e$ and $a$ have pre right adjoints, then the commutation on the left holds.
		\item If $C$ is co-complete and $e$ and $a$ have post right adjoints, then the commutation on the right holds.
	\end{enumerate}
\end{lemma}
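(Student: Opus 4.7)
The plan is to prove Claim (ii); Claim (i) follows by an exactly dual argument, swapping $\REnv{}{}$ for $\LEnv{}{}$, ``post'' for ``pre'', and co-completeness for completeness. The strategy is to treat Lemmas \ref{lem:Kan_id:2} and \ref{lem:Kan_id:3} as black boxes and chain them together. The implicit but essential first observation is that the square on the left of the diagram must commute, so that $fa = ed$ as functors $A \to E$; this is what allows the two intermediate Kan extensions to be identified.

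Assuming $fa = ed$, apply Lemma \ref{lem:Kan_id:2}~(i) to the top row $A \xrightarrow{a} B \xrightarrow{b} C$ together with the functor $f : B \to E$. Co-completeness of $C$ is given, and the post-right-adjointness of $a$ is exactly one of the two hypotheses. The conclusion is
\[ \REnv{f}{b} \;=\; \REnv{fa}{ba} \;=\; \REnv{ed}{ba}, \]
where the second equality is just the commutativity $fa = ed$. Next apply Lemma \ref{lem:Kan_id:3}~(ii) to the pair $A \xrightarrow{ba} C$ together with the factorization $A \xrightarrow{d} D \xrightarrow{e} E$. Again co-completeness of $C$ is given and the post-right-adjointness of $e$ is the other hypothesis, yielding
\[ \REnv{ed}{ba} \circ e \;=\; \REnv{d}{ba}. \]
Composing the two identities gives $\REnv{f}{b} \circ e = \REnv{d}{ba}$, which is the dotted commutation in the right-hand diagram.

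Claim (i) is obtained by running the same chain with the roles of the envelopes reversed: Lemma \ref{lem:Kan_id:2}~(ii) gives $\LEnv{f}{b} = \LEnv{fa}{ba} = \LEnv{ed}{ba}$ using the pre-right adjoint of $a$ and completeness of $C$, and Lemma \ref{lem:Kan_id:3}~(i) gives $\LEnv{ed}{ba} \circ e = \LEnv{d}{ba}$ using the pre-right adjoint of $e$. Composing yields the left-hand commutation.

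The only conceptual step is noticing that the commuting square $fa = ed$ is what lets the two invocations interlock: Lemma \ref{lem:Kan_id:2} shortens the middle leg from $f$ down to $fa$, and Lemma \ref{lem:Kan_id:3} re-expresses that same $fa = ed$ as a factorization through $e$, so that post-composition with $e$ collapses the Kan extension along the composite back to the Kan extension along $d$. Because both auxiliary lemmas have already been established pointwise via Lemmas \ref{lem:preadj:1}, \ref{lem:preadj:2}, and \ref{lem:Kan_pointwise}, no additional categorical machinery is needed, and there is no real obstacle beyond matching hypotheses to conclusions; the bookkeeping of which adjoint sits on which functor is the only place where a careless substitution could go wrong.
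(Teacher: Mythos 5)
Your proposal is correct and follows essentially the same route as the paper: the paper's proof is exactly the chain $\REnv{d}{ba} = \REnv{ed}{ba}\circ e = \REnv{fa}{ba}\circ e = \REnv{f}{b}\circ e$, invoking Lemma \ref{lem:Kan_id:3}~(ii) and Lemma \ref{lem:Kan_id:2}~(i), with the dual argument for Claim~(i). Your only addition is making explicit the commutation $fa = ed$ behind the middle equality, which the paper leaves implicit.
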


The commutation follows from :
\[\begin{split}
	\REnv{d}{ba} &= \REnv{ed}{ba} e \quad \mbox{ by Lemma \ref{lem:Kan_id:3}}~(ii) , \\
	&= \REnv{fa}{ba} e \\
	&= \REnv{f}{b} e \quad \mbox{ by Lemma \ref{lem:Kan_id:2}}~(i) .
\end{split}\]
This completes the proof of Lemma \ref{lem:Kan_id:4}. \qed 

\subsection{Proof of Theorem \ref{thm:null_ext} } \label{sec:proof:null_ext}

Claim (iii) is a direct consequence of claims (i) and (ii), so we just prove the first two claims. The following lemma will come useful :

\begin{lemma} \label{lem:null_ext:5}
	Let the same assumptions as Theorem \ref{thm:null_ext} hold. Then :
	\begin{enumerate} [(i)]
		\item The functors $\pi_*$ from \eqref{eqn:def:pi_star} and $\bot_2$ have right inverses.
		\item The functors $\incl_1$ from \eqref{eqn:null_ext:1} and $\incl_2$ from \eqref{eqn:null_ext:2} have right inverses.
	\end{enumerate}
\end{lemma}

Lemma \ref{lem:null_ext:5} is proved in Section \ref{sec:proof:null_ext:5}. A key technical result to be presented later is Lemma \ref{lem:null_ext:5}. The proof of diagram \eqref{eqn:null_ext:3} requires a total of 6 commutation relations to be verified. 
\begin{enumerate}
	\item This first three are the three commutation squares along the left column of the figure
	\begin{equation} \label{eqn:null_ext:4}
		\begin{tikzcd}
			\Comma{\gamma}{\gamma} \arrow[r] & \Comma{\gamma j_1 j_2}{\gamma} \\
			\Comma{\base}{\base} \arrow[r] \arrow[u] & \Comma{j_1 j_2}{\main} \arrow[u]
		\end{tikzcd}, \quad 
		\begin{tikzcd}
			\Comma{\base}{\base} \arrow[d, "="'] \arrow[r, "\incl_2"] & \Comma{j_1 j_2}{\main} \arrow[d, "\pi_*"] \\
			\Comma{\base}{\base} \arrow[r, "\incl_1"] & \Comma{j_2}{\pi}
		\end{tikzcd}, \quad 
		\begin{tikzcd}
			\Comma{\base}{\base} \arrow[d, "\Forget_2"'] \arrow[r, "\incl_1"] & \Comma{j_2 \pi}{} \arrow[d, "\Forget_2"] \\
			\base \arrow[r, "j_1 j_2"] & \main
		\end{tikzcd} .
	\end{equation}
	The diagrams in \eqref{eqn:null_ext:4} are proved in Section \ref{sec:proof:null_ext:4}.
	\item Upper commuting triangle (yellow) : The two nullity structures are defined by composition. So their commutation holds due to the two commutations in the row above.
	\item Middle commuting triangle (blue) : This is an identity involving two left envelopes with the same codomain. The commutation follows from Lemma \ref{lem:null_ext:5}~(ii) and Lemma \ref{lem:Kan_id:4} (i) .
	\item Lower commuting triangle (red) : This commutation is a natural transformation according to Claim (i), or an isomorphism according to Claim (ii). 
\end{enumerate}

\paragraph{Proof of Claim (i)} By definition of envelopes, we have $\LEnv{\pi_*}{ \Null_{\inter, \base} } \Rightarrow \REnv{ \Forget_2 }{ \LEnv{\pi_*}{ \Null_{\inter, \base} } } \Forget_2$. This means that
\[\begin{split}
	\Null_{\base, \base} &= \LEnv{\pi_*}{ \Null_{\inter, \base} } \incl_1 \\
	& \Rightarrow \REnv{ \Forget_2 }{ \LEnv{\pi_*}{ \Null_{\inter, \base} } } \Forget_2 \incl_1 \\
	&= \REnv{ \Forget_2 }{ \LEnv{\pi_*}{ \Null_{\inter, \base} } } j_1 j_2 \Forget_2^{\base}  \\
	&= \left[ \REnv{ \Forget_2 }{ \LEnv{\pi_*}{ \Null_{\inter, \base} } } j_1 j_2 \right] \Forget_2^{\base} .
\end{split}\]
Also note that 
\[ \Null_{\base, \base} \Rightarrow \REnv{\Forget_2^{\base}}{\Null_{\base, \base}} \Forget_2^{\base} = \bar{\Null} \Forget_2^{\base} . \]
By the universality of $\bar{\Null}$ this means that 
\[ \bar{\Null} \Rightarrow \REnv{ \Forget_2 }{ \LEnv{\pi_*}{ \Null_{\inter, \base} } } j_1 j_2 , \]
as claimed.

\paragraph{Proof of Claim (ii)} Now suppose that Assumption \ref{A:5} holds. Assumption \ref{A:5} implies that given any $\calB$ object $X$, the left slice of $j_2(X)$ under $j_2$ coincides with the left slice of $X$ in $\calB$. Now take any object $X$. Then $\overline{\Null^{(\base)}}(X)$ is the colimit of the composite functor $\LEnv{\pi_*}{\Null_{\inter, \base}} \circ \incl_1$ applied the left slice $\Comma{\base}{X}$ of $X$ in $\base$. Thus it is the colimit of the functor $\LEnv{\pi_*}{\Null_{\inter, \base}}$ applied to the image of $\Comma{\base}{X}$ under $\incl_1$. By Assumption \ref{A:5} this image is precisely the left slice of $j_1 j_2(X)$ in $\Comma{j_2}{\pi}$. Thus $\overline{\Null^{(\base)}}(X)$ is the colimit of the functor $\LEnv{\pi_*}{\Null_{\inter, \base}}$ applied to the left slice of $j_1 j_2(X)$ in $\Comma{j_2}{\pi}$. But the latter is just $\Null^{(\main)} \paran{j_1 j_2(X)}$, proving the claim.
This completes the proof of Theorem \ref{thm:null_ext}. \qed

\subsection{Proof of \eqref{eqn:null_ext:4}} \label{sec:proof:null_ext:4}

The first commutation square comprises of comma categories and induced functors. The commutation follows from the following commutative diagram.
\[\begin{tikzcd}
	\base \arrow[dr, "\gamma"] \arrow[rrrrrr, "j_1 j_2" ] \arrow[dddddd, "="' ] & & & {} & & & \main \arrow[dl, "\gamma"] \\
	& \SetCat \arrow[rrrr, "=" ] & & {} & & \SetCat & \\
	& & \base \arrow[ul, "\gamma"'] \arrow[rr, "=" ] & {} & \base \arrow[ur, "\gamma j_1 j_2"] & & \\
	\\
	& & \base \arrow[uu, "=" ] \arrow[rr, "=" ] \arrow[dl, "="] & {} & \base \arrow[uu, "="' ] \arrow[dr, "j_1 j_2"'] & & \\
	& \base \arrow[uuuu, "\gamma" ] \arrow[rrrr, "j_1 j_2" ] & & {} & & \calM \arrow[uuuu, "\gamma"' ] & \\ 
	\base \arrow[ur, "="] \arrow[rrrrrr, "j_1 j_2" ] & & & {} & & & \main \arrow[uuuuuu, "="' ] \arrow[ul, "="] 
\end{tikzcd}\]
Each of the four corners represent the constituents of the four comma categories involved. Lemma \ref{lem:oh9d} applies to each of the triples of functors to provide the final commutation between comma categories. Similarly, the second commutation square in \eqref{eqn:null_ext:4} follows from the following deconstructive commutative diagram :
\[\begin{tikzcd}
	& & & & & \main \arrow[ld, "="] \arrow[dddddd, "="] \\
	& & & & \main \arrow[dddd, "\pi"] & \\
	& & & \base \arrow[ru, "j_1 j_2"'] \arrow[dd, "="] & & \\
	\base \arrow[r, "="] \arrow[rrrrrddd, "j_1 j_2"', bend right] \arrow[rrrrruuu, "j_1 j_2", bend left] & \base \arrow[rrrdd, "j_2"', bend right] \arrow[rrruu, "j_1 j_2", bend left] & \base \arrow[l, "="'] \arrow[ru, "="] \arrow[rd, "="'] & & & \\
	& & & \base \arrow[rd, "j_2"] & & \\
	& & & & \inter & \\
	& & & & & \main \arrow[lu, "\pi"'] 
\end{tikzcd}\]
The final commutation square in \eqref{eqn:null_ext:4} is a direct application of \eqref{eqn:dgbw}. This completes the proof of \eqref{eqn:null_ext:4}. \qed

\subsection{Proof of Lemma \ref{lem:null_ext:5}} \label{sec:proof:null_ext:5}

Lemma \ref{lem:oh9d} will be used to prove the right inverses of the induced functors between comma categories. This is done via the following diagram :
\[\begin{tikzcd} [column sep = large]
	\base \arrow[rr, "j_2"] \arrow[rrd, "="'] \arrow[dd, dashed, Shobuj, pos=0.8, "=", bend left=10] && \inter \arrow[rrd, "j_1"] \arrow[dd, dashed, Shobuj, pos=0.8, "=", bend left=10] && \main \arrow[ll, "\pi"'] \arrow[rrd, "="] \arrow[dd, dashed, Shobuj, pos=0.8, "=", bend left=10] && \\
	&& \base \arrow[lld, "="'] \arrow[rr, "j_1 j_2"] && \main \arrow[lld, "\pi"] && \main \arrow[lld, "="] \arrow[ll, "="'] \\
	\base \arrow[rr, "j_2"] && \inter && \main \arrow[ll, "\pi"] && 
\end{tikzcd}\]
Each row represents the constituents of a comma category. The triple of arrows from the second to the third row constitute $\pi_*$. The triple of arrows created by composition, from the first to the third row are identities. This makes the triple of arrows from the first to the second layer a right inverse to $\pi_*$. The right inverse of $\bot_2$ follows in a similar manner from the following diagram :
\[\begin{tikzcd} [column sep = large]
	\star \arrow[rr] \arrow[rrd, "1_{\base}"'] \arrow[dd, dashed, Shobuj, pos=0.8, "=", bend left=10] && \star \arrow[rrd, "1_{\inter}"] \arrow[dd, dashed, Shobuj, pos=0.8, "=", bend left=10] && \main \arrow[ll] \arrow[rrd, "="] \arrow[dd, dashed, Shobuj, pos=0.8, "=", bend left=10] && \\
	&& \base \arrow[lld, "="'] \arrow[rr, "j_2"] && \inter \arrow[lld] && \main \arrow[lld, "="] \arrow[ll, "\pi"'] \\
	\star \arrow[rr] && \star && \main \arrow[ll] && 
\end{tikzcd}\]
Here $\star$ denotes the trivial 1-point category. We have used the trivial fact that the comma category created out of $\star \rightarrow \star \leftarrow \calC$ is isomorphic to $\calC$. This completes the proof of Claim (i).

Next we construct a pre- right adjoint to $\incl_1$. It requires taking note of the following induced functors :
\[\begin{tikzcd}
	\base \arrow[r, "j_2"] \arrow[d, "="] & \inter \arrow[d, "="] & \main \arrow[l, "\pi"'] \arrow[d, "\pi"] \\
	\base \arrow[r, "j_2"'] & \inter & \inter \arrow[l, "="]
\end{tikzcd} \imply 
\begin{tikzcd} \Comma{j_2}{\pi} \arrow[d, "\incl_4", Shobuj] \\ \Comma{j_2}{\inter} \end{tikzcd} ; \quad 
\begin{tikzcd}
	\base \arrow[r, "j_2"'] \arrow[d, "="'] & \inter \arrow[d, "="'] & \inter \arrow[d, "j_1"] \arrow[l, "="] \\
	\base \arrow[r, "j_2"'] & \inter & \main \arrow[l, "\pi"]
\end{tikzcd} \imply 
\begin{tikzcd} \Comma{j_2}{\inter} \arrow[d, "\incl_5", Shobuj] \\ \Comma{j_2}{\pi} \end{tikzcd}
\]
Now $\incl_1$, $\incl_3$, $\incl_3^{R*}$, $\incl_4$, $\incl_5$ assemble together to produce a right inverse to $\incl_1$.
\[\begin{tikzcd}
	\Comma{j_2}{\pi} \arrow[dr, "="'] \arrow[r, "\incl_4"] & \Comma{j_2}{\inter} \arrow[d, "\incl_5"] \\
	& \Comma{j_2}{\pi}
\end{tikzcd} ; \quad 
\begin{tikzcd}
	& \Comma{j_2}{\inter} \arrow[dd, "\incl_3^{R*}"'] & \Comma{j_2}{\pi} \\
	\Comma{j_2}{\pi} \arrow[ur, "\incl_4"] \arrow[dr, dashed, "\incl_1^{R*}"'] & \\
	& \ArrowCat{\base} \arrow[uur, dashed, "\incl_1"'] \arrow[r, "\incl_3"] & \Comma{j_2}{\inter} \arrow[uu, "\incl_5"']
\end{tikzcd}\]
The commutation says that
\[ \incl_1 \incl_1^R = \incl_5 \incl_3 \incl_3^R \incl_4 = \incl_5 \incl_4 = \Id, \]
proving the claim. Similarly we have two other induced functor between comma categories :
\[\begin{tikzcd}
	\base \arrow[r, "j_2"'] \arrow[d, "="'] & \inter \arrow[d, "j_1"'] & \inter \arrow[d, "j_1"] \arrow[l, "="] \\
	\base \arrow[r, "j_1 j_2"'] & \main & \main \arrow[l, "="]
\end{tikzcd} \imply 
\begin{tikzcd} \Comma{j_2}{\inter} \arrow[d, "\incl_6", Shobuj] \\ \Comma{j_1 j_2}{\main} \end{tikzcd} ; \quad
\begin{tikzcd}
	\base \arrow[r, "j_1 j_2"] \arrow[d, "="] & \main \arrow[d, "="] & \main \arrow[l, "="'] \arrow[d, "\pi"] \\
	\base \arrow[r, "j_2"'] & \inter & \inter \arrow[l, "="]
\end{tikzcd} \imply 
\begin{tikzcd} \Comma{j_1 j_2}{\main} \arrow[d, "\incl_7", Shobuj] \\ \Comma{j_2}{\inter} \end{tikzcd}\]
Now $\incl_2$, $\incl_3$, $\incl_3^{R*}$, $\incl_6$, $\incl_7$ assemble together to produce a right inverse to $\incl_2$.
\[\begin{tikzcd}
	\Comma{j_1 j_2}{\main} \arrow[dr, "="'] \arrow[r, "\incl_7"] & \Comma{j_2}{\inter} \arrow[d, "\incl_6"] \\
	& \Comma{j_1 j_2}{\main}
\end{tikzcd} ; \quad 
\begin{tikzcd}
	& \Comma{j_2}{\inter} \arrow[dd, "\incl_3^{R*}"'] & \Comma{j_1 j_2}{\main} \\
	\Comma{j_1 j_2}{\main} \arrow[ur, "\incl_7"] \arrow[dr, dashed, "\incl_2^{R*}"'] & \\
	& \ArrowCat{\base} \arrow[uur, dashed, "\incl_2"'] \arrow[r, "\incl_3"] & \Comma{j_2}{\inter} \arrow[uu, "\incl_6"']
\end{tikzcd}\]
This completes the proof of Lemma \ref{lem:null_ext:5}. \qed


\end{document}